\numberwithin{equation}{section}
\title{Branched covers of elliptic curves and K\"ahler groups with exotic finiteness properties}
\author{Claudio Llosa Isenrich}
\address{Laboratoire de Math\'ematiques d'Orsay, Univ. Paris-Sud, CNRS, Universit\'e Paris-Saclay, 91405 Orsay, France}
\email{claudio.llosa-isenrich@math.u-psud.fr}
\thanks{This work was supported by a EPSRC Research Studentship and by the German National Academic Foundation}
\keywords{K\"ahler groups, Homological finiteness properties, Branched covers}
\subjclass[2010]{32J27, 20J05 (20F65)}
\begin{document}

\newcommand{\QQ}{{\mathds Q}}
\newcommand{\RR}{{\mathds R}}
\newcommand{\NN}{{\mathds N}}
\newcommand{\ZZ}{{\mathds Z}}
\newcommand{\CC}{{\mathds C}}
\newcommand{\eps}{{\epsilon}}

\theoremstyle{plain}
\newtheorem{theorem}{Theorem}[section]
\newtheorem{conjecture}[theorem]{Conjecture}
\newtheorem{corollary}[theorem]{Corollary}
\newtheorem{lemma}[theorem]{Lemma}
\newtheorem{proposition}[theorem]{Proposition}
\newtheorem{question}{Question}

\theoremstyle{definition}
\newtheorem{remark}[theorem]{Remark}
\newtheorem*{acknowledgements*}{Acknowledgements}
\newtheorem{example}[theorem]{Example}
\newtheorem{definition}[theorem]{Definition}

\renewcommand{\proofname}{Proof}

\begin{abstract}
We construct K\"ahler groups with arbitrary finiteness properties by mapping products of closed Riemann surfaces holomorphically onto an elliptic curve: for each $r\geq 3$, we obtain large classes of K\"ahler groups that have classifying spaces with finite $(r-1)$-skeleton but do not have classifying spaces with finitely many $r$-cells. We describe invariants which distinguish many of these groups. Our construction is inspired by examples of Dimca, Papadima and Suciu. 
\end{abstract}

\maketitle

\section{Introduction}

A \textit{K\"ahler group} is a group which can be realised as fundamental group of a compact K\"ahler manifold. In particular, every K\"ahler group is finitely presented. A group $G$ is of \textit{finiteness type $\mathcal{F}_r$} if there is a $K(G,1)$ with finite $r$-skeleton; so type $\mathcal{F}_1$ is equivalent to finite generation and type $\mathcal{F}_2$ is equivalent to finite presentability.

In 2009, Dimca, Papadima and Suciu \cite{DimPapSuc-09-II} constructed the first examples of K\"ahler groups that are of finiteness type $\mathcal{F}_{r-1}$, but not of type $\mathcal{F}_r$, for all $r\geq 3$. In fact their groups are \textit{projective}, that is, fundamental groups of smooth projective manifolds. In particular, these groups, although torsion-free, can not have compact K\"ahler manifolds as classifying spaces. Our main goal here is to construct broader classes of such groups.

We will show how to construct large classes of new examples of K\"ahler groups with arbitrary finiteness properties from surjective maps of a direct product of surface groups onto an elliptic curve. More precisely, we will prove 

\begin{theorem}
Let $r\geq 3$ and let $g_1,\cdots, g_r \geq 2$. Let $E$ be an elliptic curve and, for $i=1,\cdots, r$, let $S_{g_i}$ be a closed connected surface of genus $g_i$ together with a branched covering $f_{g_i}:S_{g_i}\rightarrow E$. Assume that the map
\[
f=\sum_{i=1}^r f_{g_i}: S_{g_1}\times \cdots \times S_{g_r}\rightarrow E
\]
induces a surjective map on fundamental groups.

Then the generic fibre $H$ of $f$ is connected and $f$ induces a short exact sequence
\[
1\rightarrow \pi_1 H\rightarrow \pi_1 S_{g_1}\times \cdots \times \pi_1 S_{g_r}\rightarrow \pi_1 E\rightarrow 1
\] 
on fundamental groups. Moreover, the group $\pi_1H$ is a projective (and thus K\"ahler) group of type $\mathcal{F}_{r-1}$, but not of type $\mathcal{F}_r$.
 \label{thmFinProp}
\end{theorem}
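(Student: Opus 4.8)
The plan is to establish the three assertions in turn: connectedness of the generic fibre, the resulting short exact sequence, and the finiteness properties of $\pi_1 H$, the last being the technical heart. Write $X=S_{g_1}\times\cdots\times S_{g_r}$. I would first pass to the Stein factorization $f=g\circ h$, where $h\colon X\to Z$ has connected fibres and $g\colon Z\to E$ is a finite morphism; then the number of components of a generic fibre of $f$ equals $\deg g$, so it suffices to prove $\deg g=1$. The key point is that $Z$ cannot have genus $\ge 2$: restricting $h$ to a slice $\{*\}\times S_{g_k}\times\{*\}$ gives a morphism $S_{g_k}\to Z$ which is non-constant for every $k$, since $f=\sum_i f_{g_i}$ depends non-trivially on the $k$-th coordinate. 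If $Z$ had genus $\ge 2$, the de Franchis theorem would make the set of non-constant morphisms $S_{g_k}\to Z$ finite, so the family of these restrictions over the connected parameter space $\prod_{j\ne k}S_{g_j}$ would be constant; hence $h$ would factor through the projection $\mathrm{pr}_k$, forcing $f$ to depend on $x_k$ alone, a contradiction for $r\ge 2$. As $\mathbb{P}^1$ admits no non-constant map to $E$, the curve $Z$ is elliptic and $g$ is an isogeny. Since $f$ is surjective on $\pi_1$ and $h_*$ is surjective (connected fibres), $g_*$ is surjective, so the isogeny $g$ has degree $1$ and $H$ is connected.

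For the exact sequence I would apply the homotopy exact sequence of the smooth fibre bundle (Ehresmann) obtained over the complement $E^{\circ}$ of the finitely many critical values: as $E^{\circ}$ is aspherical this yields $1\to\pi_1 H\to\pi_1(f^{-1}(E^{\circ}))\to\pi_1(E^{\circ})\to 1$, and I would then descend to $X$ and $E$ by a van Kampen / Nori-type argument, checking that the meridians of the removed fibres act compatibly, to obtain $1\to\pi_1 H\to\pi_1 X\to\pi_1 E\to 1$ with $\pi_1 H=\ker(f_*)$. Now set $G=\pi_1 X=\Gamma_1\times\cdots\times\Gamma_r$ with $\Gamma_i=\pi_1 S_{g_i}$, and $\phi=f_*\colon G\to\ZZ^2=\pi_1 E$, so that $N:=\pi_1 H=\ker\phi$. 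That $N$ is projective is immediate, since $H$ is a smooth fibre (by generic smoothness) of a morphism of smooth projective varieties, hence itself smooth projective.

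For the finiteness properties, following the model of the Stallings--Bieri and Bestvina--Brady examples adapted by Dimca--Papadima--Suciu, I would record two geometric inputs. First, each $\phi_i:=\phi|_{\Gamma_i}=(f_{g_i})_*$ has image of finite index in $\ZZ^2$: a non-constant holomorphic map $S_{g_i}\to E$ induces an injection on $H^1(-;\CC)$, so $(f_{g_i})^*H^1(E)$ is two-dimensional and $\phi_i$ has full rank; equivalently $\lambda\circ\phi_i\ne 0$ for every non-zero $\lambda\in\mathrm{Hom}(\ZZ^2,\RR)$. Second, since $g_i\ge 2$ the BNS invariant of the surface group vanishes, $\Sigma^1(\Gamma_i)=\emptyset$, whence $\Sigma^m(\Gamma_i)=\emptyset$ for all $m\ge 1$. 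I would then invoke the (homotopical) Bieri--Renz criterion: for $N\trianglelefteq G$ with $G/N$ free abelian, $N$ is of type $\mathcal F_m$ iff the subsphere $S(G,N)\subseteq S(G)$ of characters factoring through $G\to G/N=\ZZ^2$, which here is a circle consisting of the classes $[\lambda\circ\phi]$, is contained in $\Sigma^m(G)$.

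Using Meinert's product formula for the homotopical invariants,
\[
\Sigma^m(G)^{c}=\bigcup_{i_1+\cdots+i_r=m}\Sigma^{i_1}(\Gamma_1)^{c}*\cdots*\Sigma^{i_r}(\Gamma_r)^{c},
\]
and the values $\Sigma^{\ge 1}(\Gamma_j)^{c}=S(\Gamma_j)$, $\Sigma^{0}(\Gamma_j)^{c}=\emptyset$, a join term is the whole sphere $S(G)$ exactly when all $i_j\ge 1$, and is a proper subsphere supported on a proper set of factors otherwise. For $m=r$ the all-ones term gives $\Sigma^r(G)=\emptyset$, so $S(G,N)\not\subseteq\Sigma^r(G)$ and $N$ is not $\mathcal F_r$. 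For $m=r-1$ every term omits at least one factor, so $\Sigma^{r-1}(G)^{c}$ is a union of subspheres of the form ``characters vanishing on some $\Gamma_j$''; a class $[\lambda\circ\phi]$ lies in such a subsphere iff $\lambda\circ\phi_j=0$ for some $j$, which is excluded by the full-rank property of each $\phi_i$. Hence $S(G,N)\subseteq\Sigma^{r-1}(G)$ and $N$ is of type $\mathcal F_{r-1}$. I expect the positive direction $S(G,N)\subseteq\Sigma^{r-1}(G)$ to be the main obstacle, since it is exactly here that both geometric inputs and the precise form of the product formula are needed; the non-finiteness half is comparatively soft, requiring only that a single character survive in $\Sigma^r(G)^{c}$.
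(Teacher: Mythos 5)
Your Stein-factorisation argument for connectedness is correct and genuinely different from the paper's: where you use de Franchis rigidity to rule out an intermediate curve of genus $\geq 2$, the paper observes that the restrictions of $\beta$ to the factors have finite-index (hence non-cyclic) images in $\pi_1$ of the intermediate curve, and that images coming from different factors commute, producing a $\ZZ^2$-subgroup and forcing the curve to be elliptic; both arguments then finish identically via Riemann--Hurwitz and $\pi_1$-surjectivity. The projectivity of $\pi_1 H$ is also fine. The two remaining steps, however, each contain a genuine gap.

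First, the exact sequence. The van Kampen descent from $f^{-1}(E^{\circ})$ to $X$ is not the routine bookkeeping your phrase ``checking that the meridians of the removed fibres act compatibly'' suggests: the delicate point is injectivity of $\pi_1 H\rightarrow \pi_1 X$, and this genuinely fails in low dimensions (for $r=2$ the image of $\pi_1 H$ is the infinitely generated group $\mathrm{ker}(f_{\#})$, so the surface group $\pi_1 H$ cannot inject). What makes injectivity true for $r\geq 3$ is that the critical locus of $f$ is the finite set $C_1\times\cdots\times C_r$, with $C_i$ the critical points of $f_{g_i}$ --- a verification absent from your proposal --- so the singular fibres differ from $H$ by collapsing vanishing cycles that are $(r-1)$-spheres, simply connected precisely because $r\geq 3$. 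This is exactly the content of Theorem C of Dimca--Papadima--Suciu, which the paper invokes after explicitly checking that the singularities are isolated; your sketch neither cites it nor supplies the mechanism it provides.

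Second, and more seriously, the finiteness properties. The ``Meinert product formula'' you invoke, as a two-sided equality for the homotopical BNSR invariants of a product with arbitrary factors, is not a citable theorem: Meinert's product formula is proved for direct products of \emph{virtually free} groups, and surface groups are not virtually free; the general equality is the product conjecture, of which (at the time of this paper) only the homological version over a field was established by Bieri--Geoghegan. Your argument uses \emph{both} inclusions: the non-$\mathcal{F}_r$ half needs joins of complements to lie in the complement (so that the all-ones term forces $\Sigma^r(G)=\emptyset$), while the $\mathcal{F}_{r-1}$ half needs the complement to be covered by the join terms (Meinert's inequality). The negative half can be repaired by passing to homological invariants over $\QQ$, where the product formula is known and $\Sigma^1(\Lambda_g;\QQ)=\emptyset$ still holds (Novikov--Euler-characteristic argument), since $\Sigma^r(G)\subseteq\Sigma^r(G;\QQ)$; but the positive half cannot be recovered this way, because the comparison between homotopical and field-coefficient invariants points in the wrong direction for positive conclusions. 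This is precisely why the paper does not argue group-theoretically at this step: it obtains type $\mathcal{F}_{r-1}$ from geometry ($H$ is a compact complex manifold with $\pi_i H\cong\pi_i X=0$ for $2\leq i\leq r-1$ by DPS Theorem C, so a $K(\pi_1 H,1)$ is built from a finite complex by attaching cells of dimension $\geq r$), and it obtains the failure of type $\mathcal{F}_r$ from Bridson--Howie--Miller--Short's Theorem B, applied to the subgroup $\mathrm{ker}(f_{\#})$, which meets each factor in an infinitely generated normal subgroup. As written, your proof of the heart of the theorem rests on an unavailable citation; to salvage the BNSR route you would have to prove the required inclusions for products of surface groups, or substitute the inputs above.
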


Under the additional assumption that all maps $f_{g_i}$ are \textit{purely branched}, we will give the following complete classification of all K\"ahler groups that can be obtained using our construction. The notion of a purely branched covering will be defined in Definition \ref{defpurelybranched}. Roughly speaking a purely bran\-ched covering is a branched covering which is obtained from the base space by alone branching.

\begin{theorem}
 Let $E$ be an elliptic curve. Let $r,s\geq 3$, let $g_i,h_j\geq 2$, let $S_{g_i}$ be a closed Riemann surface of genus $g_i\geq 2$ and let $R_{h_i}$ be a closed Riemann surface of genus $h_i\geq 2$ for $1\leq i \leq r$ and $1\leq j \leq s$. Assume that there are purely branched $k_i$-fold holomorphic covering maps $p_i: S_{g_i}\rightarrow E$ and purely branched $l_i$-fold holomorphic covering maps $q_i: R_{h_i}\rightarrow E$. Define $p=\sum_{i=1}^r p_i : S_{g_1}\times \cdots \times S_{g_r} \rightarrow E$ and $q=\sum_{j=1}^s q_j: R_{h_1}\times \cdots \times R_{h_s}\rightarrow E$, and denote by $H_p$ and $H_q$ the smooth generic fibres of $p$, respectively $q$.
 
 Then the K\"ahler groups $\pi_1 H_p$ and $\pi_1 H_q$ are isomorphic if and only if $r=s$ and there is a permutation of the $R_{h_i}$ and $q_i$ such that $g_i=h_i$ and $k_i=l_i$ for $i=1,\cdots,r$.
 \label{thmClassfbKG}
\end{theorem}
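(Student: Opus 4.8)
The plan is to reduce the statement to a classification of the pairs $(G_p,\phi_p)$, where $G_p=\pi_1 S_{g_1}\times\cdots\times\pi_1 S_{g_r}$ and $\phi_p\colon G_p\to\pi_1 E\cong\ZZ^2$ is the surjection induced by $p$, so that $\pi_1 H_p=\ker\phi_p$ by Theorem \ref{thmFinProp}. The backward implication is the easy one. Assuming that, after reindexing, $g_i=h_i$ and $k_i=l_i$, I would first isolate two facts about a single purely branched cover. The genus $g_i$ is recovered from the abstract group $\pi_1 S_{g_i}$ (for instance via $\operatorname{rk}H_1=2g_i$), while the degree $k_i$ equals, up to sign, the cup‑product pairing $\langle\phi_i^{*}\mu,[S_{g_i}]\rangle$, where $\mu$ generates $H^2(\ZZ^2;\ZZ)\cong\ZZ$ and $[S_{g_i}]$ is the fundamental class of the $PD_2$ group $\pi_1 S_{g_i}$; indeed, by naturality and the projection formula, $\phi_{i*}[S_{g_i}]=k_i[E]$, so $\langle\phi_i^{*}\mu,[S_{g_i}]\rangle=k_i\langle\mu,[E]\rangle=\pm k_i$. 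Conversely, such a surjection factors through $H_1(S_{g_i};\ZZ)=\ZZ^{2g_i}$, and since $\operatorname{Aut}(\pi_1 S_{g_i})$ acts on $H_1$ through the full symplectic group $\operatorname{Sp}(2g_i,\ZZ)$, a standard symplectic normal‑form argument shows that a surjection $\pi_1 S_{g_i}\to\ZZ^2$ is determined up to $\operatorname{Aut}(\pi_1 S_{g_i})\times\operatorname{GL}_2(\ZZ)$ by the single integer $k_i$. Hence matching genera and degrees yield automorphisms of the factors and of $\ZZ^2$ that assemble into an isomorphism $G_p\to G_q$ intertwining $\phi_p$ and $\phi_q$, and this restricts to an isomorphism $\pi_1 H_p\cong\pi_1 H_q$.

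For the forward implication I would first recover $r$: by Theorem \ref{thmFinProp} the group $\pi_1 H_p$ is of type $\mathcal F_{r-1}$ but not $\mathcal F_r$, and the largest index with this property is an isomorphism invariant, so $\pi_1 H_p\cong\pi_1 H_q$ forces $r=s$. It then remains to recover the multiset of pairs $\{(g_i,k_i)\}$. The strategy is to show that the abstract group $N:=\pi_1 H_p$ determines the pair $(G_p,\phi_p)$ up to permutation of the factors and post/precomposition with automorphisms. Once this is established, the multiset of subgroups $\{\pi_1 S_{g_i}\}\leq G_p$ is recovered by the uniqueness of direct‑product decompositions into directly indecomposable, centreless factors (each $\pi_1 S_{g_i}$ is one‑ended hyperbolic, hence torsion‑free, centreless and directly indecomposable, and $\pi_1 S_{g_i}\cong\pi_1 S_{g_j}$ iff $g_i=g_j$), and then the two observations above read off $g_i$ and $k_i$ from each pair $(\pi_1 S_{g_i},\phi_p|_{\pi_1 S_{g_i}})$.

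The hard part is exactly the reconstruction of $(G_p,\phi_p)$ from $N$. Here I would use that $N=\ker\phi_p$ is a full subdirect product of the surface groups $\pi_1 S_{g_i}$: since each restriction $\phi_i$ is surjective (the covers being purely branched), one checks directly that $N$ projects onto every factor and that $N\cap\pi_1 S_{g_i}=\ker\phi_i$, with $N/\prod_i\ker\phi_i\cong\ZZ^{2(r-1)}$. Combined with the sharp finiteness behaviour of $N$ (type $\mathcal F_{r-1}$ but not $\mathcal F_r$, so $r$ is the minimal number of factors), the structure theory of subdirect products of surface groups should constrain the ambient product tightly enough that both $G_p$ and the coabelian quotient $\phi_p\colon G_p\to\ZZ^2$ are canonically determined by $N$; I expect this to be the technical heart of the proof and the point at which the earlier results of the paper are invoked. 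Once $(G_p,\phi_p)$ has been pinned down in this way, the cup‑product and Krull–Remak–Schmidt arguments of the first two paragraphs complete the classification.
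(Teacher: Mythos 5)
Your proposal follows the same top--level skeleton as the paper's argument: identify $\pi_1 H_p$ with $\mathrm{ker}(\phi_p)$ via Theorem \ref{thmFinProp}, show that an abstract isomorphism of kernels forces an isomorphism of the ambient short exact sequences, split that isomorphism into factor data, and then classify surjections $\pi_1 S_g\rightarrow \ZZ^2$ up to automorphisms. Within that skeleton you make three genuinely different (and attractive) choices. First, you recover $r=s$ from the finiteness properties supplied by Theorem \ref{thmFinProp}, where the paper gets it from the product structure; this works. Second, you recover the factors by Remak--Krull--Schmidt uniqueness for products of centreless directly indecomposable groups instead of the paper's appeal to Remark \ref{rmkIsomProd} (i.e.\ to \cite{BriHowMilSho-13}); this also works and is more elementary. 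Third, and most interestingly, you replace the paper's entire linear-algebra classification (Theorem \ref{thmClassification}, Proposition \ref{propNotIsom}, Lemma \ref{lemInvble}) by the cup-product computation $\langle \phi_i^*\mu,[S_{g_i}]\rangle=\pm k_i$: since this number is preserved (up to sign) under pre- and post-composition with automorphisms, the covering degree is an invariant of the pair $(\pi_1 S_{g_i},\phi_i)$. This is correct, cleaner than the paper's matrix computation, and does not use the purely branched hypothesis at all -- it is essentially the generalisation that the paper's closing Remark only conjectures.

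There are, however, two places where the proposal is not yet a proof. (1) The step you yourself flag as the ``technical heart'' -- that the abstract group $N=\mathrm{ker}(\phi_p)$ determines the pair $(G_p,\phi_p)$ up to the obvious equivalences -- is left as ``the structure theory of subdirect products \emph{should} constrain\dots''. This is precisely the paper's Lemma \ref{lemCharacteristic} and Theorems \ref{thmIsomSESeq}, \ref{thmAbstractIsom}, which rest on the nontrivial Theorem C of \cite{BriHowMilSho-13} (every isomorphism between such kernels carries $N\cap\Lambda_{g_i}$ to $N'\cap\Lambda_{h_i}$ up to reordering); without citing or reproving that result, your reconstruction step has no content, and the finiteness properties of $N$ alone will not substitute for it. (2) Your backward direction rests on the assertion that a surjection $\pi_1 S_g\rightarrow\ZZ^2$ is determined up to $\mathrm{Aut}(\pi_1 S_g)\times GL(2,\ZZ)$ by the single integer $d=|\langle\phi^*\mu,[S_g]\rangle|$. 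This is true, but it is not a one-line ``standard normal-form argument'': after passing to $H_1$ via Corollary \ref{corIsomSESeq} one must show $Sp^{\pm}(2g,\ZZ)$ acts transitively on primitive rank-two sublattices with induced form of divisor $d$, which needs transitivity on primitive vectors, Eichler-type transvections to change the second generator modulo $d$, and surjectivity of $Sp(2g-2,\ZZ)\rightarrow Sp(2g-2,\ZZ/d\ZZ)$ together with its transitivity on unimodular vectors. (Alternatively you could avoid this entirely by using the paper's geometric lemma that a purely branched $k$-fold cover induces the standard form $(I\cdots I\,0\cdots 0)$ on homology.) A smaller point: when you ``assemble'' the factor equivalences into an isomorphism intertwining $\phi_p$ and $\phi_q$, the $GL(2,\ZZ)$-change of basis on the target must be the \emph{same} for every factor (this is the condition recorded in Theorem \ref{thmIsomSESeq}); this can be arranged because for each $C\in GL(2,\ZZ)$ there is a matrix in $Sp^{\pm}(2g,\ZZ)$ intertwining the normal form with $C$ times it, but it needs to be said.
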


The construction is inspired by the work of Dimca, Papadima and Suciu, but is more general. The construction given by Dimca, Papadima and Suciu is a topological construction and does not lead to an explicit finite presentation. In \cite{Llo-16}, Llosa Isenrich derived an explicit presentation for these examples, answering a question of Suciu. The original proof that Dimca, Papadima and Suciu's examples have arbitrary finiteness properties consists of an involved argument making use of characteristic varieties. Alternative proofs have been given since by Biswas, Mj and Pancholi \cite{BisMjPan-14} and by Suciu \cite{Suc-12}. We will give a shorter proof making use of the classification of the finiteness properties of subgroups of direct products of surface groups obtained by Bridson, Howie, Miller and Short \cite{BriHowMilSho-02}.

Biswas, Mj and Pancholi \cite{BisMjPan-14} suggested a more general approach for arbitrary irrational Lefschetz pencils over surfaces of positive genus with singularities of Morse type. Although it is not explicit in \cite{BisMjPan-14}, the class of examples constructed in our work can also be obtained as a consequence of their work. However, the techniques are quite different: their approach is based on topological Lefschetz fibrations which by definition have Morse type singularities and the result for non-Morse type singularities then follows by a deformation argument. They do not develop any techniques to distinguish between different examples (cf. Theorem \ref{thmClassfbKG} above).

As an example of our construction we will produce a class of K\"ahler groups which to us seem the most natural analogue in the K\"ahler setting of a specific subclass of the Bestvina--Brady groups \cite{BesBra-97}. Using our classification we will then show that these groups are not isomorphic to Dimca, Papadima and Suciu's groups. An interesting question that one can ask here is if one can generalise our construction further to produce natural K\"ahler analogues for all of the original Bestvina--Brady groups. Notice that Dimca, Papadima and Suciu showed that the only original Bestvina--Brady groups which are K\"ahler are free abelian groups of even rank \cite[Corollary 1.3]{DimPapSuc-08}.

The structure of this work is as follows: in Section \ref{secConnFib} we prove that the fibres of our maps are connected. In Section \ref{secBBgroups} we will prove an algebraic result which will allow us to deduce the finiteness properties of the groups arising from our construction. In Section \ref{secGenConst} we explain our general construction which we then apply in Section \ref{secSpecConst} to realise the algebraically described groups from Section \ref{secBBgroups} as K\"ahler groups. In Section \ref{secComparison} we explain how the question about the existence of isomorphisms between K\"ahler groups obtained using our construction reduces to a question in Linear Algebra. In Section \ref{secClassfb} we solve this question for maps arising from purely branched coverings and conclude that the groups in Section \ref{secSpecConst} are not isomorphic to Dimca, Papadima and Suciu's groups.

\begin{acknowledgements*}
I am very grateful to my advisor Martin Bridson for his generous support and advice while writing this paper and for the very helpful discussions that we had in our numerous meetings. I am also very greatful to Mahan Mj for the very helpful discussions which allowed me to simplify the proof of Lemma \ref{thmConnSurfGen} and for explaining to me how our examples can also be obtained using techniques from \cite{BisMjPan-14}. Moreover, I would like to thank the anonymous referee for helpful comments and suggestions that led to improvements to the exposition of this paper, and my office mates Alexander Betts and Giles Gardam for the many inspiring discussions about the content of this work.
\end{acknowledgements*}

\section{Connectedness of fibres}

\label{secConnFib}
In this section we prove that the fibres of the maps in Theorem \ref{thmFinProp} are connected.

\begin{lemma}
Let $r\geq 2$, let $E$ be an elliptic curve and let $S_{g_i}$ be a closed Riemann surface of genus $g_i\geq 2$. Let $f_{g_i}: S_{g_i}\rightarrow E$ be holomorphic branched covers of $E$. 

The map $f=\sum_{i=1}^r f_{g_i}:S_{g_1}\times \cdots \times S_{g_r} \rightarrow E$ has connected fibres if and only if it induces a surjective map on fundamental groups.
\label{thmConnSurfGen}
\end{lemma}

\begin{proof}
Every holomorphic map $h:X\rightarrow Y$ between compact complex manifolds $X$ and $Y$ with connected fibres induces a surjective map on fundamental groups, since it is a locally trivial fibration over the complement of a complex codimension one subvariety of $Y$. Hence, if $f$ has connected fibres then it induces a surjective map on fundamental groups.

 Assume now that $f$ induces a surjective map on fundamental groups. If $f$ does not have connected fibres then Stein factorisation yields a closed Riemann surface $S$ and holomorphic maps $\alpha: S\rightarrow E$ and $\beta: S_{g_1}\times \cdots \times S_{g_r} \rightarrow S$ such that $\alpha$ is finite-to-one and $\beta$ has connected fibres. Since holomorphic finite-to-one maps between closed Riemann surfaces are branched covering maps, it follows that $\alpha$ is a branched covering.
 
 Choose a base point $(p_1,\cdots,p_r)\in S_{g_1}\times \cdots \times S_{g_r}$ and denote by $\beta_i$ the restriction of $\beta$ to the $i$th factor $\left\{(p_1,\cdots,p_{i-1})\right\}\times S_{g_i}\times \left\{(p_{i+1},\cdots,p_r)\right\}$. Then there is $e_i\in E$ such that $\alpha\circ \beta_i = e_i + f_{g_i}$. Since $\beta_i$ is holomorphic, it is non-trivial and finite-to-one, and hence a finite-sheeted holomorphic branched covering map. It now follows that $\beta_{i\ast}(\pi_1 S_{g_i})\leq \pi_1 S$ is a finite index subgroup and therefore not cyclic for $i=1,\cdots,r$.
 
 It then follows from \cite[Lemma 7.1]{BisMjPan-14} that $S$ must itself be an elliptic curve. Since the argument is short we want to give it here: Choose $\gamma_1\in \pi_1 S_{g_1}$ and $\gamma_2\in \pi_1 S_{g_2}$ such that their images $\beta_1\circ \gamma_1$ and $\beta_2\circ \gamma_2$ do not lie in a common cyclic subgroup of $\pi_1 S$. Then $\beta_1\circ \gamma_1$ and $\beta_2\circ\gamma_2$ generate a $\ZZ^2$-subgroup of $\pi_1S$ and the only closed Riemann surfaces with $\ZZ^2$-subgroups are elliptic curves (e.g. \cite[Theorem 1]{Jac-70}).
 
 Using Euler characteristic, it follows that any branched covering map between 2-dimensional tori is an unramified covering map. By assumption the map $f=\alpha\circ \beta$ induces a surjective map on fundamental groups. Hence, the map $\alpha:S\rightarrow E$ is an unramified holomorphic covering which is surjective on fundamental groups and therefore $S=E$ and $\alpha$ is biholomorphic. In particular, the map $f$ has connected fibres.
\end{proof}

For a group $G$ and a subset $S\subset G$ denote by $\left\langle \left\langle S\right\rangle \right\rangle \leq G$ its normal closure in $G$. 

We introduce a special class of branched covering maps of tori. Let $Y$ be a torus of real dimension $k\geq 2$, let $X$ be a closed connected manifold of real dimension $k$ and let $D\subset Y$ be a real subvariety of codimension at least two (we will be interested in the case when $X$ and $Y$ are complex manifolds and $D$ is a complex subvariety). Let $f: X\rightarrow Y$  be a branched covering map with branching locus $D$, that is, $f^{-1}(D)$ is a nowhere dense set in $X$ mapping onto $D$ and the restriction $f: X\setminus f^{-1}(D) \rightarrow Y\setminus D$ is an unramified covering.

Assume that for a base point $z_0\in Y\setminus D$ there are simple closed loops $\mu_1,\dots,\mu_k,b_1,\dots,b_l: \left[0,1\right] \rightarrow Y\setminus D$ based at $z_0$ and points $p_1,\dots,p_l\in D$ with the following properties:
\begin{itemize}
\item $\pi_1 (Y\setminus D)=\left\langle \left[\mu_1\right],\dots,\left[\mu_k\right],\left[b_1\right],\dots,\left[b_l\right]\right\rangle$;
\item $\mu_i(\left[0,1\right])\cap \mu_j(\left[0,1\right]) = \left\{z_0\right\}$ for $i \neq j$;
\item the loops $\mu_1,\dots,\mu_k$ generates $\pi_1Y$; and
\item for any choice $U_1,\dots,U_l\subset Y$ of open neighbourhoods of $p_1,\dots,p_l$ there are paths $\delta_i : \left[0,1\right] \rightarrow Y\setminus D$ starting at $z_0$ and ending at a point in $U_i\setminus \left\{p_i\right\}$ and loops $\nu_i: \left[0,1\right]\rightarrow U_i\setminus \left\{p_i\right\}$ based at these points, such that for the concatenation $\beta_i=\delta_i\cdot \nu_i\cdot \delta_i^{-1}$, we have $\left[\beta_i\right] = \left[b_i\right]\in \pi_1 (Y\setminus D)$.
\end{itemize}

\begin{definition}
 We call the map $f$ \textit{purely branched} if there exist loops $\mu_1,\dots,\mu_k,b_1,\dots,b_l$ as above which satisfy the condition
\[
 \left\langle\left\langle \left[\mu_1\right],\dots,\left[\mu_k\right]\right\rangle\right\rangle  \leq f_{\ast} (\pi_1 (X\setminus f^{-1}(D)))\leq \pi_1 (Y\setminus D),
\]
 i.e. every lift to $X$ of each $\mu_i$ is a loop.
\label{defpurelybranched}
\end{definition}

Note that for $Y$ a 2-torus, $X$ a closed connected surface of genus $g\geq 1$ and $f:X\rightarrow Y $ a branched covering map with branching locus $D=\left\{p_1,\cdots,p_r\right\}$, the map $f$ is purely branched if and only if there are simple closed loops $\mu_1,\mu_2:\left[0,1\right]\rightarrow Y\setminus D$ which generate $\pi_1 Y$, intersect only in $\mu_1(0)=\mu_2(0)$ and have the property that every lift of $\mu_1$ and $\mu_2$ is a loop in $X$.

 We want to remark that for a branched covering map being purely branched is not the same as being surjective on fundamental groups, as Example \ref{ex:surjnotpb} at the end of Section \ref{secClassfb} will show.

\section{A K\"ahler analogue of Bestvina--Brady groups}

\label{secBBgroups}

A large class of groups with prescribed finiteness type are the Bestvina--Brady groups which were constructed using combinatorial Morse theory \cite{BesBra-97}. These groups arise as kernels of surjective maps from Right Angled Artin groups to the integers.

Given a finite simplicial graph $\Gamma$, denote by $V(\Gamma)$ its vertex set and by $E(\Gamma)$ its edge set. We define the \textit{Right Angled Artin group} (RAAG) $A_{\Gamma}$ for $\Gamma$ as the group with the finite presentation

\[
A_{\Gamma}=\left\langle V(\Gamma)\mid \left[v,w\right]\mbox{ if } vw\in E(\Gamma)\right\rangle.
\]

Associated to every such presentation of a RAAG is a natural homomorphism

\[
 \begin{array}{lrl}
 \phi_{\Gamma}:& A_{\Gamma} &\rightarrow \ZZ=\left\langle t\right\rangle \\
 & V(\Gamma)\ni v &\mapsto t\\
 \end{array}
\]

Associated to the RAAG $A_{\Gamma}$ we have the flag complex $X_{\Gamma}$ with 1-skeleton $\Gamma$, that is, the simplicial complex obtained from $\Gamma$ by requiring that whenever a finite set of vertices in $\Gamma$ is pairwise connected, there is a simplex in $X_\Gamma$ with these vertices. 

The \textit{Bestvina--Brady group} $BB_{\Gamma}$ associated to $\Gamma$ is defined as $BB_{\Gamma} = \mathrm{ker} \phi_{\Gamma}$. The finiteness properties of the Bestvina--Brady groups are completely understood:

\begin{theorem}[Bestvina--Brady Theorem, \cite{BesBra-97}]
Let $\Gamma$ be a finite simplicial graph. Then for every integer $n\geq 0$, the Bestvina--Brady group $BB_{\Gamma}$ is of type $\mathcal{F}_n$ if and only if the space $X_{\Gamma}$ is (n-1)-connected, that is, $\pi _i (X_{\Gamma}) = 1$ for all $0\leq i \leq n$.
\end{theorem}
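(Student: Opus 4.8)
The statement is Bestvina and Brady's theorem, so in practice one simply cites \cite{BesBra-97}; but here is how I would prove it, following their combinatorial Morse theory. The plan is to realise $A_\Gamma$ and $\phi_\Gamma$ geometrically and then run a Morse-theoretic analysis of a contractible complex on which $BB_\Gamma$ acts.

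First I would build the Salvetti complex $S_\Gamma$: a nonpositively curved cube complex with a single vertex, one edge for each $v\in V(\Gamma)$, and one $k$-cube for each set of $k$ pairwise-adjacent vertices, so that $\pi_1 S_\Gamma = A_\Gamma$ and the cubes record exactly the simplices of $X_\Gamma$. Its universal cover $Y=\widetilde{S_\Gamma}$ is a CAT(0) cube complex, hence contractible, and $A_\Gamma$ acts on it freely and cocompactly. I would then realise $\phi_\Gamma$ by the map $S_\Gamma\to S^1$ wrapping every edge once positively around the circle, and lift it to an $A_\Gamma$-equivariant map $f\colon Y\to\RR$ that is affine on each cube, with integer values on vertices and non-constant on edges. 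This is a Morse function in the sense of \cite{BesBra-97}, and since $BB_\Gamma=\ker\phi_\Gamma$, the subgroup $BB_\Gamma$ preserves $f$ (and hence every level set and slab), while $A_\Gamma/BB_\Gamma\cong\ZZ$ translates $\RR$.

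The geometric heart of the argument is the computation of the ascending and descending links. At each vertex of $Y$ the link splits according to whether $f$ increases or decreases along an incident edge; because every generator has the same positive $\phi_\Gamma$-value, an edge-direction is ascending exactly when it points positively along its edge, and a set of ascending directions spans a simplex exactly when the corresponding generators pairwise commute. Thus both the ascending and the descending link of every vertex are canonically identified with $X_\Gamma$. I would then apply the Morse Lemma of \cite{BesBra-97}: as the parameter grows, the sublevel set $f^{-1}((-\infty,b])$ is obtained up to homotopy from $f^{-1}((-\infty,a])$ by coning off the descending links of the vertices with values in $(a,b]$, and dually for superlevel sets using ascending links. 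Since $Y$ is contractible, an induction shows that each slab $f^{-1}([-m,m])$ is $(n-1)$-connected precisely when the links, i.e.\ $X_\Gamma$, are $(n-1)$-connected.

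Finally I would feed this into Brown's finiteness criterion. The group $BB_\Gamma$ acts freely on the contractible complex $Y$, and it acts cocompactly on each slab $f^{-1}([-m,m])$ (the translation direction of $A_\Gamma$ is confined to the bounded interval, so the residual action is cocompact). The nested slabs exhaust $Y$, and by the previous step this filtration is essentially $(n-1)$-connected if and only if $X_\Gamma$ is $(n-1)$-connected; Brown's criterion then gives that $BB_\Gamma$ is of type $\mathcal F_n$ under exactly this hypothesis. For the reverse implication I would check that when $X_\Gamma$ fails to be $(n-1)$-connected the non-trivial class in $\pi_{n-1}(X_\Gamma)$ persists under coning, so the filtration is not essentially $(n-1)$-connected and $BB_\Gamma$ is not of type $\mathcal F_n$. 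The main obstacle is the link computation together with the bookkeeping in the Morse Lemma, and, for the negative direction, verifying that the obstruction genuinely survives in the limit rather than being killed by a later slab.
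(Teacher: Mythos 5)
This theorem is not proved in the paper at all: it is quoted as background and attributed directly to \cite{BesBra-97}, so the only ``approach'' the paper takes is the citation you yourself say is the practical answer. Your sketch is a faithful outline of Bestvina and Brady's original argument (Salvetti complex, the affine Morse function lifting $\phi_\Gamma$, the identification of both ascending and descending links with $X_\Gamma$, the Morse Lemma, and Brown's criterion applied to the slab filtration), and it is correct in outline; the one place where you are waving rather than proving is the converse direction, where showing that a nontrivial class in $\pi_{n-1}(X_\Gamma)$ is never killed at any later stage of the filtration is genuinely the hardest part of \cite{BesBra-97} (handled there by separate homological and $\pi_1$ arguments), not a routine check --- but you flag this honestly, and for a result one would in any case cite, that is fine.
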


Dimca, Papdima and Suciu showed that the only Bestvina--Brady groups which are K\"ahler are the obvious ones in the following sense 

\begin{theorem}[{\cite[Corollary 1.3]{DimPapSuc-08}}]
 Let $\Gamma$ be a finite simplicial graph. Then $BB_{\Gamma}$ is K\"ahler if and only if $BB_{\Gamma}$ is free abelian of even rank.
\end{theorem}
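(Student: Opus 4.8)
The reverse implication is immediate: a free abelian group $\ZZ^{2n}$ of even rank is the fundamental group of the compact complex torus $\CC^n/\ZZ^{2n}$, which is Kähler. (Concretely, taking $\Gamma$ to be the complete graph on $2n+1$ vertices gives $A_\Gamma=\ZZ^{2n+1}$ and $BB_\Gamma=\ker\phi_\Gamma=\ZZ^{2n}$.) So the plan is to prove the forward implication, and my approach is through \emph{cohomology jump loci} and the restrictions that Kähler geometry imposes on them. The two inputs from Kähler theory that I would use are: (i) every Kähler group has even first Betti number, because the Hodge decomposition splits $H^1(-,\CC)$ into $H^{1,0}\oplus H^{0,1}$ with conjugate summands; and (ii) every Kähler group is $1$-formal (Deligne--Griffiths--Morgan--Sullivan), so by the tangent cone theorem its first resonance variety $\mathcal{R}^1(G)\subseteq H^1(G,\CC)$ is a finite union of rationally defined linear subspaces and coincides with the tangent cone at the trivial character of the first characteristic variety $\mathcal{V}^1(G)\subseteq\operatorname{Hom}(G,\CC^{*})$. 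On top of this, the theory of pencils (Beauville, Catanese, Arapura, packaged for jump loci by Dimca--Papadima--Suciu) constrains the positive-dimensional components of these loci for Kähler $G$: each such component of $\mathcal{R}^1(G)$ is an isotropic subspace for the cup-product form $H^1\wedge H^1\to H^2$ that arises from a holomorphic pencil onto a curve or orbifold, and two distinct components meet only at the origin. It is this package of isotropicity and transversality conditions that I would play against the combinatorics of $\Gamma$.

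First I would dispose of the groups that are not even finitely presented: if $BB_\Gamma$ is Kähler then it is finitely presented, so by the Bestvina--Brady Theorem $X_\Gamma$ is simply connected and in particular $\Gamma$ is connected. The heart of the argument is then to compute $\mathcal{R}^1(BB_\Gamma)$ in terms of $\Gamma$ and to show it is incompatible with the Kähler constraints unless $\Gamma$ is complete. The natural starting point is the Papadima--Suciu description of the resonance variety of the ambient RAAG, namely that $\mathcal{R}^1(A_\Gamma)$ is the union of the coordinate subspaces $\CC^{W}$ indexed by the subsets $W\subseteq V(\Gamma)$ spanning a disconnected induced subgraph. I would transport this information to $BB_\Gamma$ through the extension $1\to BB_\Gamma\to A_\Gamma\xrightarrow{\phi_\Gamma}\ZZ\to 1$, using the Hochschild--Serre spectral sequence to control $H^{\le 2}(BB_\Gamma,\CC)$ and its cup product; concretely, I would model $BB_\Gamma$ by the Bestvina--Brady Morse fibre inside the universal abelian cover of the Salvetti complex of $A_\Gamma$, which gives an explicit CW description from which the low-degree cohomology and the cup-product form can be read off. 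The expected conclusion is that a single pair of non-adjacent vertices of $\Gamma$ forces a positive-dimensional component of $\mathcal{R}^1(BB_\Gamma)$ that either fails to be isotropic or meets a second component away from the origin, violating (ii) and the pencil constraints above. Hence $\Gamma$ must be complete, so $A_\Gamma=\ZZ^{m}$ and $BB_\Gamma=\ZZ^{m-1}$; finally the even-Betti-number constraint forces $m-1$ to be even, i.e.\ $BB_\Gamma$ is free abelian of even rank.

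The main obstacle is precisely this last computation: converting the combinatorial resonance data of $BB_\Gamma$ into a clean contradiction with the Kähler structural constraints. Two points require genuine care. First, one must really use $1$-formality to pass between the resonance variety, where the cup product and hence isotropicity live, and the characteristic variety, where the pencil theorems are most naturally applied; without formality the tangent cone theorem fails and the translation breaks down. Second, the spectral sequence relating $H^{*}(BB_\Gamma)$ to $H^{*}(A_\Gamma)$ must be controlled finely enough to pin down not just the dimensions but the cup-product form and its isotropic subspaces, which is where the explicit Morse-theoretic CW model does the work — note that the obstruction genuinely appears only after passing to $BB_\Gamma$, since the coordinate components of $\mathcal{R}^1(A_\Gamma)$ may themselves be isotropic. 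Once the form on $H^1(BB_\Gamma,\CC)$ is identified, exhibiting the offending component from a pair of non-adjacent vertices should reduce to a short linear-algebra computation, completing the proof.
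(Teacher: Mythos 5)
First, a point of orientation: the paper you are being compared against does not prove this statement at all --- it is quoted from \cite{DimPapSuc-08} as a known result and used purely as motivation (to explain why Bestvina--Brady groups cannot themselves supply K\"ahler groups with exotic finiteness properties). So the only meaningful comparison is with the proof in that reference, and your outline does track its broad architecture: finite presentability forces $X_\Gamma$ to be simply connected via the Bestvina--Brady Theorem; $1$-formality of K\"ahler groups plus the tangent cone theorem transports pencil-theoretic constraints from $\mathcal{V}^1$ to $\mathcal{R}^1(BB_\Gamma)$; a combinatorial computation of $\mathcal{R}^1(BB_\Gamma)$ then forces $\Gamma$ to be complete; and evenness of $b_1$ finishes. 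Nevertheless, as a proof your proposal has two genuine gaps.

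The first is that the pivotal step is deferred rather than done: computing $\mathcal{R}^1(BB_\Gamma)$ and extracting the contradiction for non-complete $\Gamma$ is precisely the main technical theorem of \cite{DimPapSuc-08} (the jump loci of Artin kernels, controlled via the monodromy action of $\ZZ$ on the homology of the Morse fibre --- note that this action is nontrivial, so $H^1(BB_\Gamma,\CC)$ is not just a corank-one slice of $H^1(A_\Gamma,\CC)$); labelling this an ``expected conclusion'' that ``should reduce to a short linear-algebra computation'' leaves the actual content unproven. The second, more serious, gap is that the K\"ahler constraint you quote is backwards, so the contradiction you aim for would not be one. For a compact K\"ahler group the positive-dimensional components of $\mathcal{R}^1$ are \emph{not} isotropic: they are $1$-isotropic in the sense of Dimca--Papadima--Suciu, meaning the cup product restricted to such a component has exactly one-dimensional image, because they come from pencils onto compact curves $C$ of positive genus where $f^*H^2(C,\CC)\neq 0$. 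Isotropic ($0$-isotropic) components are exactly what punctured genus-zero pencils produce, and those are permitted in the quasi-K\"ahler setting but \emph{forbidden} in the compact case. A surface group already illustrates this: $\Lambda_2$ is K\"ahler and $\mathcal{R}^1(\Lambda_2)=H^1(\Lambda_2,\CC)$, which is as far from isotropic as possible. Consequently, exhibiting a non-isotropic component of $\mathcal{R}^1(BB_\Gamma)$ contradicts nothing; and in the critical borderline cases --- $\Gamma$ complete multipartite but not complete, e.g.\ the octahedron $K_{2,2,2}$, where $BB_\Gamma$ is the Stallings--Bieri group --- the resonance components are $0$-isotropic and the group is even quasi-projective, so the correct obstruction to K\"ahlerness there is precisely the prohibition of isotropic components, the opposite of the dichotomy you set up. With the obstruction inverted and the computation missing, the argument does not close.
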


Free abelian groups are of type $\mathcal{F}_{\infty}$, since their classifying space is a torus. Hence, the Bestvina--Brady groups can not provide any examples of K\"ahler groups with arbitrary finiteness properties. However, Dimca, Papadima and Suciu \cite{DimPapSuc-09-II} observed that one can imitate the Bestvina--Brady construction to obtain K\"ahler groups with arbitrary finiteness properties. 

From a group theoretic point of view, their groups arise as kernels of maps from a direct product of surface groups to $\ZZ ^2$; We will describe the geometric construction behind these maps later in this paper. Let $\Lambda _{g_1},\cdots, \Lambda _{g_r}$, $g_1,\cdots,g_r\geq 2$ be surface groups with presentation

\[
 \Lambda_{g_i} = \left\langle a_1^i,\cdots,a_{g_i}^i,b_1^i,\cdots,b_{g_i}^i \mid \left[a_1^i,b_1^i\right]\cdots \left[a_{g_i}^i,b_{g_i}^i\right]\right\rangle
\]

Although they did not describe them as such, Dimca, Papadima and Suciu's groups are kernels of the surjective maps
\begin{equation}
\begin{array}{cclc}
 \phi_{g_1,\cdots,g_r}:& \Lambda_{g_1}\times \cdots\times \Lambda_{g_r} & \rightarrow &\ZZ ^2=\left\langle a,b \mid \left[a,b\right]\right\rangle\\
& a_1^i, a_2^i &\mapsto &a\\
& b_1^i, b_2^i &\mapsto & b\\
& a_3^i, \cdots, a_{g_i}^i &\mapsto & 0\\
& b_3^i, \cdots, b_{g_i}^i &\mapsto & 0,\\
\end{array}
\label{eqnDPSmap}
\end{equation}
as our explanation of their construction in Section \ref{secGenConst} will show, where $g_1,\cdots,g_r\geq 2$ and $r\geq 3$.

In particular, we will see that the maps constructed in Section \ref{secGenConst} satisfy all the conditions of Theorem \ref{thmFinProp}. This will provide us with a new proof that the groups  $\phi_{g_1,\cdots,g_r}$ are K\"ahler groups of finiteness type $\mathcal{F}_{r-1}$, but not of type $\mathcal{F}_r$. Thus:

\begin{theorem}
 If $g_1,\cdots,g_r\geq 2$, then the kernel of $\phi_{g_1,\cdots,g_r}$ is a K\"ahler group of finiteness type $\mathcal{F}_{r-1}$, but not of type $\mathcal{F}_r$.
\end{theorem}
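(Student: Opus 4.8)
The plan is to deduce this as an immediate consequence of Theorem \ref{thmFinProp}, once the algebraically defined homomorphism $\phi_{g_1,\cdots,g_r}$ has been identified with the map induced on fundamental groups by a holomorphic map of the shape $f=\sum_{i=1}^r f_{g_i}$. Concretely, I would identify $\pi_1 E$ with $\ZZ^2=\langle a,b\mid [a,b]\rangle$ and $\pi_1 S_{g_i}$ with $\Lambda_{g_i}$, and then realise $\phi_{g_1,\cdots,g_r}$ as $f_\ast$ for a suitable sum of branched covers $f_{g_i}:S_{g_i}\to E$.

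The first and central step is the geometric realisation: for each $i$ I would construct a holomorphic branched covering $f_{g_i}:S_{g_i}\to E$ whose induced map on $\pi_1$ agrees with the restriction of $\phi_{g_1,\cdots,g_r}$ to the factor $\Lambda_{g_i}$, that is, sends $a_1^i,a_2^i\mapsto a$, $b_1^i,b_2^i\mapsto b$, and $a_3^i,\cdots,a_{g_i}^i,b_3^i,\cdots,b_{g_i}^i\mapsto 0$. This is exactly the output of the construction carried out in Section \ref{secGenConst}, which I would invoke here. Because $f$ is defined factor-wise by $f(x_1,\cdots,x_r)=\sum_{i=1}^r f_{g_i}(x_i)$ in the group $E$, the induced map on the product $\pi_1(S_{g_1}\times\cdots\times S_{g_r})=\Lambda_{g_1}\times\cdots\times\Lambda_{g_r}$ is the sum of the $f_{g_i\ast}$, and therefore coincides with $\phi_{g_1,\cdots,g_r}$ on the nose.

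Next I would verify surjectivity on fundamental groups, which is immediate: already on the first factor one has $\phi_{g_1,\cdots,g_r}(a_1^1)=a$ and $\phi_{g_1,\cdots,g_r}(b_1^1)=b$, and these generate $\ZZ^2=\pi_1 E$. Thus $f$ satisfies the hypothesis of Theorem \ref{thmFinProp}, and applying that theorem gives that the generic fibre $H$ is connected together with a short exact sequence
\[
1\rightarrow \pi_1 H \rightarrow \Lambda_{g_1}\times\cdots\times\Lambda_{g_r} \rightarrow \pi_1 E \rightarrow 1
\]
in which the right-hand arrow is $f_\ast=\phi_{g_1,\cdots,g_r}$. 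Hence $\ker \phi_{g_1,\cdots,g_r}=\pi_1 H$, and Theorem \ref{thmFinProp} tells us that this group is projective, and so K\"ahler, of type $\mathcal{F}_{r-1}$ but not of type $\mathcal{F}_r$, which is precisely the assertion.

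The hard part is not the logical deduction but the geometric realisation in the first step: producing \emph{holomorphic} branched covers $f_{g_i}$ whose action on $\pi_1$ collapses the generators $a_3^i,\cdots,b_{g_i}^i$ while identifying $a_1^i$ with $a_2^i$ and $b_1^i$ with $b_2^i$. Matching a prescribed homomorphism to $\ZZ^2$ by an honest holomorphic map, rather than merely a continuous one, is the delicate point, and it is where the full strength of the construction in Sections \ref{secGenConst} and \ref{secSpecConst} is needed; I would therefore defer the explicit construction to those sections and present the present statement as its corollary.
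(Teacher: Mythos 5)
Your proposal is correct and follows exactly the paper's own route: the paper likewise proves this theorem by realising $\phi_{g_1,\cdots,g_r}$ as the map induced on fundamental groups by the sum $f=\sum_{i=1}^r f_{g_i}$ of the Dimca--Papadima--Suciu $2$-fold holomorphic branched covers $f_{g_i}:S_{g_i}\rightarrow E$ (the construction recalled in Section \ref{secSpecConst}), noting surjectivity on $\pi_1$, and then applying Theorem \ref{thmFinProp}. The only quibble is bookkeeping: the explicit realisation of $\phi_{g_1,\cdots,g_r}$ is carried out in Section \ref{secSpecConst} rather than Section \ref{secGenConst}, which contains the general theorem you invoke.
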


The case when $g_1=\cdots = g_r=2$ seems to be a completely analogous surface group version of the Bestvina--Brady group corresponding to the direct product $F_2\times \cdots \times F_2$ of $r$ copies of the free group on two generators. 

More generally consider the direct product $\Lambda_{g_1}\times \cdots \times \Lambda_{g_r}$ for $g_1,\cdots,g_r\geq 2$. Then, in analogy to the map $\phi_{2,\cdots,2}$, define the homomorphism

\begin{equation}
\begin{array}{cclc}
 \psi_{g_1,\cdots,g_r}:& \Lambda_{g_1}\times \cdots\times \Lambda_{g_r} & \rightarrow &\ZZ ^2=\left\langle a,b\mid \left[a,b\right]\right\rangle\\
 &a_j^i &\mapsto & a \\
 &b_j^i&\mapsto & b\\
\end{array}
\label{eqnOurExamples}
\end{equation}

To prove that all groups arising as kernel of one of the $\psi_{g_1,\cdots,g_r}$ are not of type $\mathcal{F}_r$, we use a result by Bridson, Miller, Howie and Short \cite[Theorem B]{BriHowMilSho-02}

\begin{theorem}
Let $\Lambda _{g_1},\cdots, \Lambda _{g_n}$ be surface groups and let $G\leq \Lambda _{g_1}\times \cdots \times \Lambda _{g_n}$ be a subgroup of their direct product. Assume that each intersection $L_i= G\cap \Lambda _{g_i}$ is non-trivial and arrange the factors in such a way that $L_1,\cdots,L_r$ are not finitely generated and $L_{r+1},\cdots, L_n$ are finitely generated. 

If precisely $r\geq 1$ of the $L_i$ are not finitely generated, then $G$ is not of type $FP_r$.
\label{thmBHMS}
\end{theorem}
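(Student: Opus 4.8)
The plan is to detect the failure of type $FP_r$ homologically. Recall the Bieri--Eckmann finiteness criterion (of the kind applied in \cite{BriHowMilSho-02}): if $G$ is of type $FP_r$, then the functors $H_k(G;-)$ commute with arbitrary direct products of $\ZZ G$-modules for every $k\leq r$. Consequently, to prove that $G$ is \emph{not} of type $FP_r$ it suffices to exhibit a family of coefficient modules for which this commutation fails in homological degree $r$, and the entire difficulty is to locate enough homology in degree $r$ coming from the infinitely generated intersections $L_i$.

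Write $P=\Lambda_{g_1}\times\cdots\times\Lambda_{g_n}$. Since each $\Lambda_{g_i}$ is a direct factor of $P$, it is normal in $P$, so every $L_i=G\cap\Lambda_{g_i}$ is normal in $G$; as the $L_i$ lie in distinct factors they commute, and hence generate their internal direct product $L=L_1\times\cdots\times L_n$, a normal subgroup of $G$ whose quotient $Q=G/L$ embeds in $\prod_i(\Lambda_{g_i}/L_i)$. Two standard facts about surface groups organize the $L_i$: every infinite-index subgroup of a surface group is free, and a nontrivial finitely generated normal subgroup has finite index. Thus for $i>r$ the group $L_i$ has finite index in $\Lambda_{g_i}$, so is again a surface group and of type $FP_\infty$, whereas for $i\leq r$ the group $L_i$ is not finitely generated, hence of infinite index, hence a free group of infinite rank; in particular $H_1(L_i;\ZZ)$ is infinitely generated.

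For the core argument I would analyse the Lyndon--Hochschild--Serre spectral sequence of $1\rightarrow L\rightarrow G\rightarrow Q\rightarrow 1$. By the K\"unneth theorem $H_r(L;\ZZ)$ contains the term $H_1(L_1;\ZZ)\otimes\cdots\otimes H_1(L_r;\ZZ)\otimes H_0(L_{r+1};\ZZ)\otimes\cdots\otimes H_0(L_n;\ZZ)$, which carries degree one in each of the $r$ infinite-rank free factors and is therefore infinitely generated, while by Shapiro's lemma $H_r(G;\ZZ Q)\cong H_r(L;\ZZ)$, where $\ZZ Q$ is the cyclic permutation module $\ZZ[G/L]$. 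The plan is to feed this computation into the product criterion: comparing $H_r(G;\prod_J\ZZ G)$ with $\prod_J H_r(G;\ZZ G)$, the infinitely generated top fibre term should force the comparison map to fail in degree $r$, so that $G$ is not of type $FP_r$. This is the surface-group analogue of the Stallings--Bieri computation for $\ker(F_2\times\cdots\times F_2\rightarrow\ZZ)$ and of the combinatorial Morse theory underlying the Bestvina--Brady Theorem quoted above.

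The hard part will be controlling the spectral sequence. One must show that the infinite generation concentrated in the top fibre term $H_1(L_1;\ZZ)\otimes\cdots\otimes H_1(L_r;\ZZ)$ genuinely survives to $H_r(G;-)$, that is, that it is destroyed neither by the differentials $d^2,\dots,d^r$ nor upon passing to $Q$-(co)invariants, and that its survival is incompatible with the Bieri--Eckmann product criterion. This requires understanding the $Q$-module structure of the lower fibre homology and exploiting the hyperbolicity of the $\Lambda_{g_i}$, both to identify the $L_i$ as free groups and to keep the base $Q$ under control. A clean way to organise this is an induction on $r$ that peels off one factor at a time, at each stage dividing out the $FP_\infty$ normal subgroup $L_{r+1}\times\cdots\times L_n$ (which does not change the type $FP_r$ of the ambient group) and tracking the finiteness of the fibre of the relevant coordinate projection; here one must be careful, since projecting $G$ onto its infinitely generated coordinates can enlarge the intersections and so must not be used naively.
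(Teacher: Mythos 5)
The paper itself never proves this statement: Theorem \ref{thmBHMS} is quoted as \cite[Theorem B]{BriHowMilSho-02} and used as a black box in the proof of Theorem \ref{thmNotFr}, so there is no in-paper proof to compare against, and your proposal has to be judged as a proof of the Bridson--Howie--Miller--Short theorem itself. As such it has a genuine gap. What you actually establish is correct after minor repairs: the $L_i$ are normal in $G$ and generate $L=L_1\times\cdots\times L_n$; for $i\le r$ each $L_i$ is free of infinite rank, while for $i>r$ it is of type $FP_\infty$; and by K\"unneth and Shapiro $H_r(G;\ZZ[G/L])\cong H_r(L;\ZZ)$ is infinitely generated. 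But the step you defer as ``the hard part'' --- that this infinite generation is incompatible with $FP_r$ --- is not a verification you may postpone; it is the entire content of the theorem, and the outline contains no idea for it.

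Moreover, the gap cannot be closed from the ingredients you list, because none of them uses the hypothesis that $L_i$ is the \emph{full} intersection $G\cap\Lambda_{g_i}$. Take $G=\Lambda_2\times\Lambda_2$, let $N_i=\ker(\Lambda_2\to\ZZ)$ for some epimorphism to $\ZZ$, and set $N=N_1\times N_2$. Then $N_1,N_2$ are infinitely generated free normal subgroups lying in distinct factors, K\"unneth gives that $H_2(N;\ZZ)\supseteq H_1(N_1;\ZZ)\otimes H_1(N_2;\ZZ)$ is infinitely generated, Shapiro gives that $H_2(G;\ZZ[G/N])$ is infinitely generated, and the Lyndon--Hochschild--Serre spectral sequence of $1\to N\to G\to\ZZ^2\to 1$ is available --- yet $G$ is of type $FP_\infty$. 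So no amount of ``controlling the spectral sequence'' from these inputs alone can prove non-$FP_r$; a correct proof must exploit the maximality of the $L_i$ (equivalently, that $Q=G/L$ meets the images of the factors trivially), and your proposal nowhere indicates how this enters. There are also two technical errors. First, $L_i$ is normal in $p_i(G)$, not in $\Lambda_{g_i}$, so ``$L_i$ has finite index in $\Lambda_{g_i}$'' and ``$Q$ embeds in $\prod_i\Lambda_{g_i}/L_i$'' are wrong as stated (the consequences you need do survive with $p_i(G)$ in place of $\Lambda_{g_i}$). Second, your finiteness criterion is too strong: type $FP_r$ implies that the comparison maps $H_k(G;\prod_J M_j)\to\prod_J H_k(G;M_j)$ are isomorphisms for $k\le r-1$ and epimorphisms for $k=r$, not isomorphisms for all $k\le r$ --- as you state it, $FP_r$ would imply $FP_{r+1}$. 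In particular, since $H_k(G;\ZZ G)=0$ for $k\ge 1$, the degree-$r$ comparison with coefficients $(\ZZ G)^J$ that you propose to study is a surjection onto $0$ and obstructs nothing: nonvanishing of $H_r(G;(\ZZ G)^J)$ rules out $FP_{r+1}$, not $FP_r$. Any implementation of this strategy must locate the failure in degree at most $r-1$ for group-ring coefficients (or a failure of surjectivity in degree $r$ for general coefficients), and that is exactly where the real difficulty lies.
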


For finitely presented groups, being of type $FP_r$ is equivalent to being of type $\mathcal{F}_r$ \cite[p. 197]{Bro-82}. Since all the groups we consider will be finitely presented, we do not dwell on the meaning of the homological finiteness condition $FP_r$. As a consequence of Theorem \ref{thmBHMS} we can now prove

\begin{corollary}
 Let $\Lambda _{g_1},\cdots, \Lambda _{g_r}$ be surface groups with $g_i\geq 2$ and let $r\geq 3$. Let $k\geq 1$ and let $\nu_i: \Lambda _{g_i}\rightarrow \ZZ^k$ be non-trivial homomorphisms. Then the kernel of the map
 \[
 \nu=\nu_1+\cdots + \nu_r:\Lambda_{g_1}\times \cdots \times \Lambda_{g_r} \rightarrow \ZZ^k
 \]
 is not of type $\mathcal{F}_r$.
 \label{thmNotFr}
\end{corollary}
\begin{proof}
Let $G= \mathrm{ker}(\nu)<\Lambda_{g_1}\times \cdots \times \Lambda_{g_r}$. Let $L_i= G\cap \Lambda _{g_i}:= G\cap 1\times\cdots\times \Lambda_{g_i}\times\cdots\times 1$ and note that $L_i=\mathrm{ker}(\nu_i)$. Then $L_i$ is an infinite index normal subgroup of $\Lambda_{g_i}$. Infinite index normal subgroups of a surface group are infinitely generated free groups (e.g. \cite[Theorem 1]{Jac-70} and \cite[Theorem 3.1]{BriHow-07}). Hence, none of the $L_i$ are finitely generated and therefore $G$ is not of type $FP_r$, by Theorem \ref{thmBHMS}, and hence not of type $\mathcal{F}_r$. 
\end{proof}

In particular this gives an alternative proof of the finiteness properties of the K\"ahler groups constructed by Dimca, Papadima and Suciu. In fact, we immediately obtain

\begin{corollary}
 For all $r\geq 3$ and $g_1,\cdots,g_r\geq 2$, the groups\\ $\mathrm{ker}(\phi_{g_1,\cdots,g_r})$ and  $\mathrm{ker}(\psi_{g_1,\cdots,g_r})$ are not of type $\mathcal{F}_r$.
\end{corollary}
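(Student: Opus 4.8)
The plan is to recognise both statements as immediate specialisations of Theorem \ref{thmNotFr} with $k=2$, so that essentially no new work is required beyond a bookkeeping check. First I would observe that, because $\phi_{g_1,\cdots,g_r}$ and $\psi_{g_1,\cdots,g_r}$ are each defined on the direct product $\Lambda_{g_1}\times \cdots \times \Lambda_{g_r}$ by prescribing their values on the generators of each factor $\Lambda_{g_i}$ separately, each of them decomposes as a sum $f=f_1+\cdots+f_r$, where $f_i\colon \Lambda_{g_i}\rightarrow \ZZ^2$ denotes the restriction to the $i$-th factor. This is exactly the shape of the maps to which Theorem \ref{thmNotFr} applies.

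The only point that genuinely needs checking is that each restriction $f_i$ is a non-trivial homomorphism. For $\phi_{g_1,\cdots,g_r}$ we have $f_i(a_1^i)=a\neq 0$, and for $\psi_{g_1,\cdots,g_r}$ we have $f_i(a_j^i)=a\neq 0$; in both cases the image of the $i$-th factor already contains $a$ (indeed it is all of $\ZZ^2$), so $f_i$ is certainly non-trivial for every $i=1,\cdots,r$. Since moreover $r\geq 3$ and $k=2\geq 1$, all hypotheses of Theorem \ref{thmNotFr} are satisfied, and I would conclude directly that $\mathrm{ker}(\phi_{g_1,\cdots,g_r})$ and $\mathrm{ker}(\psi_{g_1,\cdots,g_r})$ are not of type $\mathcal{F}_r$.

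There is no real obstacle to overcome here: the corollary is a routine instantiation, and the substantive content lives entirely upstream in Theorem \ref{thmNotFr}, which in turn rests on the Bridson--Howie--Miller--Short classification (Theorem \ref{thmBHMS}) together with the fact that infinite-index normal subgroups of surface groups are infinitely generated free groups. If anything, the one thing worth stating explicitly is the decomposition $f=\sum_i f_i$ and the non-triviality of the summands, since these are precisely the data that license the appeal to Theorem \ref{thmNotFr}; everything else is immediate.
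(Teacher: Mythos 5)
Your proposal is correct and follows exactly the paper's intended argument: the paper states the corollary as an immediate consequence of Theorem \ref{thmNotFr}, and your verification that $\phi_{g_1,\cdots,g_r}$ and $\psi_{g_1,\cdots,g_r}$ decompose as sums of non-trivial homomorphisms $f_i\colon \Lambda_{g_i}\rightarrow \ZZ^2$ (with $k=2$ and $r\geq 3$) is precisely the bookkeeping the paper leaves implicit.
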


\begin{remark}
Under the additional condition that the maps $f_i : \Lambda_i \rightarrow \ZZ^k$ are surjective all groups that arise in this way are group theoretic fibre products over $\ZZ^k$; one can construct explicit finite presentations for them using the same methods as in \cite{Llo-16}.
\end{remark}

\section{Constructing large classes of examples}
\label{secGenConst}

In this section we will show how to use our results to prove Theorem \ref{thmFinProp}. Let $E$ be a closed Riemann surface of positive genus and let $X$ be a closed connected complex analytic manifold of dimension $r>1$. An \textit{irrational pencil} $h: X\rightarrow E$ is a surjective holomorphic map such that the smooth generic fiber $H$ is connected. 

For $M$, $N$ complex manifolds, we say that a surjective holomorphic map  $f:M\rightarrow N$ has \textit{isolated singularities} if, for every $y\in N$, the set of singular points of $f$ in $f^{-1}(y)$ is discrete. In the case of an irrational pencil $h:X\rightarrow E$ this is equivalent to the set of singular points of $f$ being finite.

The following result is due to Dimca, Papadima and Suciu

\begin{theorem}[\cite{DimPapSuc-09-II}, Theorem C]
 Let $h: X\rightarrow E$ be an irrational pencil. Suppose that $h$ has only isolated singularities. Then the following hold:
 \begin{enumerate}
 \item The inclusion $H\hookrightarrow X$ induces isomorphisms $\pi _i (H)\cong \pi_i (X)$ for $2\leq i \leq r-2$;
 \item $h$ induces a short exact sequence $ 1\rightarrow \pi_1 H \rightarrow \pi_1 X\rightarrow \pi_1 E\rightarrow 1$.
 \end{enumerate}
 \label{thmDPSC}
\end{theorem}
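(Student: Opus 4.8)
The plan is to study $h$ away from its finitely many singular fibres, where it is an honest fibre bundle, and then to control precisely how the homotopy changes when the singular fibres are reinserted. First I would isolate the regular part. Since $h$ has only isolated singularities and $X$ is compact, the set of critical points is finite, hence so is the set of critical values $B\subset E$. Put $E^{*}=E\setminus B$ and $X^{*}=h^{-1}(E^{*})$. By Ehresmann's fibration theorem $h\colon X^{*}\to E^{*}$ is a locally trivial bundle with fibre the connected generic fibre $H$. As $E^{*}$ is a punctured Riemann surface it is a $K(\pi,1)$ with free fundamental group and $\pi_{i}(E^{*})=0$ for $i\geq 2$, so the homotopy long exact sequence of $H\hookrightarrow X^{*}\to E^{*}$ collapses to give isomorphisms $\pi_{i}(H)\cong\pi_{i}(X^{*})$ for all $i\geq 2$ together with a short exact sequence $1\to\pi_{1}H\to\pi_{1}X^{*}\to\pi_{1}E^{*}\to 1$.

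Next I would feed in the local structure of the singular fibres. Around an isolated critical point, $h$ is in suitable coordinates a holomorphic germ $(\CC^{r},0)\to(\CC,0)$ with isolated singularity; its Milnor fibre is homotopy equivalent to a wedge of $(r-1)$-spheres and the local monodromy is a composite of Picard--Lefschetz transvections along these $(r-1)$-dimensional vanishing cycles. Two consequences are crucial, and both use $r\geq 3$: since the vanishing cycles have dimension at least two, the monodromy of the bundle $X^{*}\to E^{*}$ around a small loop $\partial D_{b}$ encircling a critical value $b$ acts trivially on $\pi_{1}H$ and on $\pi_{i}H$ for $i\leq r-2$; and, up to homotopy, the singular fibre $h^{-1}(b)$ is obtained from a nearby smooth fibre by coning off the vanishing cycles, so that reinserting a neighbourhood $N_{b}=h^{-1}(D_{b})$, glued to $X^{*}$ along the mapping torus $h^{-1}(\partial D_{b})$, contributes only cells of real dimension $\geq r$ beyond the meridian $\partial D_{b}$.

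I would then glue, treating $X=X^{*}\cup\bigcup_{b}N_{b}$ as a homotopy pushout. For the fundamental group, van Kampen's theorem presents $\pi_{1}X$ as the quotient of $\pi_{1}X^{*}$ by the normal closure of the meridians $\partial D_{b}$. These meridians normally generate $\ker(\pi_{1}E^{*}\to\pi_{1}E)$ and, by the triviality of the local monodromy on $\pi_{1}H$, they can be taken to commute with $\pi_{1}H$; hence killing them meets $\pi_{1}H$ trivially and turns the sequence over $E^{*}$ into the desired sequence $1\to\pi_{1}H\to\pi_{1}X\to\pi_{1}E\to 1$, which is assertion~(2). For the higher groups, the relative homotopy groups $\pi_{*}(N_{b},h^{-1}(\partial D_{b}))$ are governed by the Milnor fibration and, in every degree $i\leq r-2$, the only new cells (vanishing thimbles in dimension $r$, together with the $2$-cell filling the meridian) sit in degree strictly above $i$; combined with the first paragraph this yields $\pi_{i}(H)\cong\pi_{i}(X^{*})\cong\pi_{i}(X)$ for $2\leq i\leq r-2$.

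The main obstacle is the top degree $i=r-1$: this is exactly the dimension in which the vanishing cycles live, so the reinsertion of the singular fibres is no longer invisible to $\pi_{r-1}$ and the pushout is not formal there. To push the isomorphism up to $i=r-1$ one must argue, using the Picard--Lefschetz description of the monodromy and the precise homotopy type of $h^{-1}(D_{b})$ relative to its boundary, that the classes carried by the thimbles do not perturb $\pi_{r-1}$ of the fibre; equivalently one must show that the boundary contribution of the vanishing cycles to the relevant long exact sequence vanishes. Correctly handling the \emph{non-smooth} singular fibres and this top-degree boundary term is the technical heart of the argument, with the local Milnor fibration and the Picard--Lefschetz formula as the essential inputs.
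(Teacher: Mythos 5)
First, a point of order: the paper does not prove this statement at all --- it is quoted, with attribution, as Theorem~C of \cite{DimPapSuc-09-II} and then used as a black box --- so your attempt can only be measured against the standard arguments, not against a proof in this paper. Your skeleton (Ehresmann away from the critical values, local Milnor theory at the isolated critical points, van Kampen and excision to reassemble $X$) is the right one, and your local inputs are correct for $r\geq 3$: the local Milnor fibres are wedges of $(r-1)$-spheres, the local monodromies act trivially on $\pi_i(H)$ for $i\leq r-2$ by general position, and $h^{-1}(D_b)$ is, up to homotopy, obtained from a nearby smooth fibre by attaching $r$-cells. Nevertheless, two steps have genuine gaps.

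The first concerns injectivity of $\pi_1H\to\pi_1X$, which is precisely what distinguishes assertion~(2) from mere right-exactness. Van Kampen gives $\pi_1X=\pi_1X^*/N$ with $N$ the normal closure of the meridians $m_b$, and you argue that since each $m_b$ centralizes $\pi_1H$, killing $N$ ``meets $\pi_1H$ trivially''. That inference is invalid: every conjugate $gm_bg^{-1}$ also centralizes the normal subgroup $\pi_1H$ (because $g^{-1}hg\in\pi_1H$), so all one gets is $N\leq C_{\pi_1X^*}(\pi_1H)$ and hence $N\cap\pi_1H\leq Z(\pi_1H)$; nothing in the hypotheses controls this center. (There is also a compatibility point you elide: the lift of $\partial D_b$ that bounds a section disc in $N_b$ must be the \emph{same} lift that centralizes $\pi_1H$, which requires triviality of the based, not merely outer, monodromy.) The same defect undermines your argument for assertion~(1): the $2$-cells filling the meridians do \emph{not} sit ``in degree strictly above $i$'' once $i\geq 2$, and attaching $2$-cells along essential loops can change all higher homotopy groups --- attach a $2$-cell to $S^1\vee S^2$ along $S^1$ and $\pi_2$ collapses from $\bigoplus_{\ZZ}\ZZ$ to $\ZZ$. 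The missing idea that repairs both at once is to base-change along the universal cover $\widetilde{E}\to E$ and work on $\hat{X}=X\times_E\widetilde{E}$: there the base is a punctured plane, whose fundamental group $\Phi$ is free \emph{on the meridians}, so triviality of the based local monodromies makes the regular part a genuine direct product $\pi_1H\times\Phi$, the normal closure of the meridians is exactly $1\times\Phi$, whence $\pi_1\hat{X}\cong\pi_1H$, and covering-space theory identifies $\pi_1\hat{X}$ with $\ker(\pi_1X\to\pi_1E)$; likewise the meridian discs upstairs merely rebuild the contractible base, so through dimension $r-1$ the space $\hat{X}$ is obtained from $H$ by attaching cells of dimension $\geq r$, while $\pi_i(X)\cong\pi_i(\hat{X})$ for $i\geq 2$.

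The second gap is the top degree, and here your instincts are better than your plan. You flag $i=r-1$ as the technical heart and propose to show that the thimbles ``do not perturb $\pi_{r-1}$ of the fibre''. No such argument can exist, because the claimed isomorphism at $i=r-1$ is false: the vanishing cycles genuinely change $\pi_{r-1}$. The correct conclusion --- and the one this paper actually invokes later, in the proof of Theorem~\ref{thmFinPropGen}, namely that the pair $(X,H)$ is $(r-1)$-connected --- is an isomorphism $\pi_i(H)\cong\pi_i(X)$ only for $i\leq r-2$, together with a surjection $\pi_{r-1}(H)\to\pi_{r-1}(X)$. The version quoted in the statement above carries an off-by-one error and is contradicted by the paper itself: Theorem~\ref{thmFinPropGen} asserts $\pi_{r-1}(H)\neq 0$, while $\pi_{r-1}(X)=0$ in the applications since $X$ is aspherical there. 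So in the range $2\leq i\leq r-2$ your approach is correct once repaired as above, but at $i=r-1$ the task is to correct the statement to a surjection, not to prove it as written.
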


A \textit{Stein manifold} is a complex manifold that embeds biholomorphically as a closed submanifold in some affine complex space $\CC^r$. Theorem \ref{thmDPSC} allows us to prove

\begin{theorem}
 Let $r\geq 3$ and let $g_1,\cdots, g_r \geq 2$. Let $E$ be an elliptic curve and, for $i=1,\cdots, r$, let $S_{g_i}$ be a closed connected surface of genus $g_i$ together with a branched covering $f_{g_i}:S_{g_i}\rightarrow E$. Assume further that the map
\[
f=\sum_{i=1}^r f_{g_i}: S_{g_1}\times \cdots \times S_{g_r}\rightarrow E
\]
induces a surjective map on fundamental groups.

Then the generic fibre $H$ of the map $f$ is a connected $(r-1)$-dimensional smooth projective variety such that

\begin{enumerate}
 \item The homotopy groups $\pi_i H$ are trivial for $2\leq i \leq r-2$ and $\pi_{r-1} H$ is nontrivial;
 \item The universal cover $\widetilde{H}$ of $H$ is a Stein manifold;
 \item The fundamental group $\pi_1 H$ is a projective (and thus K\"ahler) group of finiteness type $\mathcal{F}_{r-1}$, but not of finiteness type $\mathcal{F}_r$;
 \item The fundamental group $\pi_1 H$ is not commensurable (up to finite kernels) to any group having a classifying space of finite type;
 \item The map $f$ induces a short exact sequence $$1\rightarrow \pi_1 H \rightarrow \pi_1 S_{g_1}\times \cdots \times \pi_1 S_{g_r}\rightarrow \pi_1 E\rightarrow 1$$ on fundamental groups.
\end{enumerate}
\label{thmFinPropGen}
\end{theorem}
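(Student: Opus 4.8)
The plan is to recognise $f$ as an irrational pencil with isolated singularities and then combine Theorem \ref{thmDPSC} with the geometry of $X=S_{g_1}\times\cdots\times S_{g_r}$ and of its universal cover. First I would verify the hypotheses of Theorem \ref{thmDPSC}. Because $f$ is surjective on fundamental groups, Theorem \ref{thmConnSurfGen} shows its fibres are connected, so $f$ is an irrational pencil onto the positive genus curve $E$. Writing $f(x_1,\dots,x_r)=\sum_i f_{g_i}(x_i)$ in the group $E$, the differential at a point is the sum of the $(df_{g_i})_{x_i}$, so a point is critical exactly when every $x_i$ is a branch point of $f_{g_i}$; since each branched cover has only finitely many branch points, the critical locus is finite and $f$ has isolated singularities. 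Hence the generic fibre $H$ is a smooth connected subvariety of the projective variety $X$, so it is itself smooth projective of dimension $r-1$ and $\pi_1 H$ is a projective, and thus K\"ahler, group, which gives the projectivity assertion in (3). Applying Theorem \ref{thmDPSC} now yields the short exact sequence in (5) together with isomorphisms $\pi_i H\cong \pi_i X$ in the range below the fibre dimension. Each $S_{g_i}$ is aspherical, so $X$ is aspherical, and therefore $\pi_i H=0$ for $2\leq i\leq r-2$; equivalently the universal cover $\widetilde H$ is $(r-2)$-connected.

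Next I would treat the finiteness properties. By (5) we have $\pi_1 H=\ker(f_*\colon \pi_1 X\to \pi_1 E)$ with $f_*=\sum_i (f_{g_i})_*$, and each branched cover induces a non-trivial homomorphism $\pi_1 S_{g_i}\to \pi_1 E=\ZZ^2$, so Theorem \ref{thmNotFr} shows $\pi_1 H$ is not of type $\mathcal{F}_r$. Conversely, $\pi_1 H$ acts freely, cocompactly and properly discontinuously on $\widetilde H$ (as $H$ is a closed manifold) and $\widetilde H$ is $(r-2)$-connected, so the standard criterion for finiteness properties of groups acting on highly connected complexes gives that $\pi_1 H$ is of type $\mathcal{F}_{r-1}$; this completes (3). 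Claim (4) is then formal: type $\mathcal{F}_r$ is invariant under passage to finite index sub- and supergroups and under extensions with finite kernel, so no group commensurable to $\pi_1 H$ up to finite kernels can be of type $\mathcal{F}_r$, and in particular none can admit a classifying space of finite type.

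For (2) I would pass to universal covers. Each $\widetilde{S_{g_i}}$ is the unit disc, hence a bounded and therefore Stein domain, and a finite product of Stein manifolds is Stein, so $\widetilde X$ is Stein. Lifting $f$ to $\widetilde f\colon \widetilde X\to \CC$ over the covering $\CC\to E$, the injectivity of $\pi_1 H\hookrightarrow \pi_1 X$ coming from (5) identifies $\widetilde H$ with a component of the preimage of $H$, that is, with a smooth fibre of $\widetilde f$; as a closed complex submanifold of the Stein manifold $\widetilde X$ it is itself Stein, which proves (2).

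Finally I would establish (1). A Stein manifold of complex dimension $r-1$ has the homotopy type of a CW complex of dimension at most $r-1$, so $\widetilde H$ has vanishing homology above degree $r-1$. If $\pi_{r-1} H=\pi_{r-1}\widetilde H$ were trivial, then the already $(r-2)$-connected $\widetilde H$ would be $(r-1)$-connected, hence contractible, making $H$ a closed aspherical manifold and $\pi_1 H$ of type $\mathcal{F}_\infty$, contradicting the failure of $\mathcal{F}_r$ proved above. Thus $\pi_{r-1} H\neq 0$, which together with the vanishing in the lower range gives (1). I expect the identification of $\widetilde H$ with a smooth fibre of $\widetilde f$, and the ensuing Stein property, to be the main obstacle, since the remaining ingredients are either supplied by Theorems \ref{thmDPSC} and \ref{thmNotFr} or follow formally; it is precisely the Stein property, combined with the non-finiteness, that pins down the nontriviality of $\pi_{r-1}H$.
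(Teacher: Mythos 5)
Your proposal is correct and follows essentially the same route as the paper: isolated singularities computed from the product differential, connectedness of fibres via Theorem \ref{thmConnSurfGen}, Theorem \ref{thmDPSC} for the exact sequence and the homotopy groups, Theorem \ref{thmNotFr} for the failure of type $\mathcal{F}_r$, and Steinness of $\widetilde{H}$ as a closed submanifold of the Stein universal cover of the product. The only cosmetic difference is your argument that $\pi_{r-1}H\neq 0$, which routes through the Stein dimension bound, Whitehead's theorem and asphericity of $H$, whereas the paper argues directly that vanishing of $\pi_{r-1}H$ would produce a $K(\pi_1H,1)$ with finite $r$-skeleton; both are contradictions with the already established failure of type $\mathcal{F}_r$.
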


\begin{proof}
It is well-known that there is a unique complex structure on $S_{g_i}$ with respect to which the map $f_{g_i}$ is holomorphic, since $f_{g_i}$ is a finite-sheeted branched covering map. In particular, $f_{g_i}$ has critical points the finite preimage $C_i=f_{g_i}^{-1}(D_i)$, where $D_i\subset E$ is the finite set of branching points of $f_{g_i}$. 

This equips $X=S_{g_1}\times \cdots \times S_{g_r}$ with a projective structure with respect to which $f: X=S_{g_1}\times \cdots \times S_{g_r}\rightarrow E$ is a surjective holomorphic map. The set of singular points of $f$ is then the set of points $(x_1,\cdots, x_r)\in S_{g_1}\times \cdots \times S_{g_r}$ such that $0=\mathrm{d}f(x_1,\cdots, x_r) = \left(\mathrm{d}f_{g_1}(x_1),\cdots, \mathrm{d}f_{g_r}(x_r)\right)$. It follows that the set of singular points of $f$ is the finite set $C_1\times \cdots \times C_r$. In particular, $f$ has isolated singularities.

By assumption all of the $f_{g_i}$ are branched covering maps and $f$ is surjective on fundamental groups. Thus, by Lemma \ref{thmConnSurfGen}, the map $f$ has connected fibers.

It follows that $f$ is an irrational pencil with isolated singularities. Hence, by Theorem \ref{thmDPSC}, we obtain that $f$ induces a short exact sequence
\[
 1\rightarrow \pi_1 H \rightarrow \pi_1 X \xrightarrow[]{f_{\ast}} \pi_1 E \rightarrow 1
\]
on fundamental groups proving assertion (5). Furthermore, we obtain that $\pi_i H\cong \pi_i X\cong 0$, for $2\leq i\leq r-2$, where the last equality follows since $X$ is a $K(\pi_1 X,1)$. This implies the first part of assertion (1).

The group $\pi_1 H$ is projective, since the generic smooth fibre $H$ of $f$ is a complex submanifold of the compact projective manifold $X$.
 
Because $\pi_i H=0$ for $2\leq i\leq r-2$, we obtain a $K(\pi_1H,1)$ from $H$ by attaching cells of dimension $\geq r$. Since $H$ is a compact complex manifold, it follows that $H$ has a finite cell structure. Thus, the group $\mathrm{ker}(f_{\ast})= \pi_1 H$ is of finiteness type $\mathcal{F}_{r-1}$ and, by Corollary \ref{thmNotFr}, it is not of type $\mathcal{F}_r$. This implies assertion (3) and the second part of assertion (1), since if $\pi_{r-1}H$ were trivial we could construct a $K(G,1)$ with finite $r$-skeleton.

Assertion (4) is an immediate consequence of the well-known \cite[Proposition 2.7]{DimPapSuc-09-II}.

Assertion (1) follows similarly as in the proof of \cite[Theorem A]{DimPapSuc-09-II}. Namely, the universal covering $\widetilde{X}$, $q:\widetilde{X}\rightarrow X$ of $X=S_{g_1}\times \cdots \times S_{g_r}$ is a contractible Stein manifold and the pair $(X,H)$ is $(r-1)$-connected by Theorem \ref{thmDPSC}. Hence, the preimage $q^{-1}(H)$ of $H$ in $\widetilde{X}$ is a closed complex submanifold of the Stein manifold $\widetilde{X}$ which is biholomorphic to the universal covering $\widetilde{H}$ of $H$. Thus, $\widetilde{H}$ is Stein.
\end{proof}

Theorem \ref{thmFinProp} is now a direct consequence of Theorem \ref{thmFinPropGen}. Note that Theorem \ref{thmFinPropGen} is a generalisation of \cite[Theorem A]{DimPapSuc-09-II}.

\section{Constructing a Bestvina--Brady type class of examples}
\label{secSpecConst}

We will now proceed to explain how one can realise the maps $\phi_{g_1,\cdots,g_r}$ in \eqref{eqnDPSmap} and $\psi_{g_1,\cdots,g_r}$ in \eqref{eqnOurExamples} geometrically. More precisely, they will arise as the induced maps on fundamental groups of maps satisfying the conditions of Theorem \ref{thmFinProp}. 

The map realising $\phi_{g_1,\cdots,g_r}$ is the map used for the original construction of K\"ahler groups with arbitrary finiteness properties by Dimca, Papadima and Suciu. Since our construction imitates the construction by Dimca, Papadima and Suciu, we want to give a brief description of their construction first.

Let $E$ be an elliptic curve, that is, a complex torus of dimension one, let $g\geq 2$, and let $B=\left\{b_1,\cdots, b_{2g-2}\right\} \subset E$ be a finite subset of even size. Choose a set of generators $\alpha,\beta,\gamma_1,\cdots, \gamma_{2g-2}$ of the first homology group $H_1(E\setminus B, \ZZ)$ such that $\gamma _i$ is the boundary of a small disc centred at $b_i$, $i=1,\cdots, 2g-2$, and $\alpha, \beta$ are generators of $\pi_1 E$ which have simple closed representatives intersecting in a single point. 

Then the map $H_1(E\setminus B,\ZZ)\rightarrow \ZZ/2\ZZ$ defined by $\gamma_i\mapsto 1$ and $\alpha,\beta \mapsto 0$ induces a 2-fold normal covering of $\pi_1 (E\setminus B)$ which extends continuously to a 2-fold branched covering $f_g: S_g\rightarrow E$ from a topological surface $S_g$ of genus $g\geq 2$ onto $E$. It is well-known there is a unique complex structure on $S_g$ such that the map $f_g$ is holomorphic.

By looking at a suitable concrete realisation of the map $f_g$ (see \cite{Llo-16} for more details), it is not hard to see that it induces the map
\[
\begin{array}{cccc}
f_{g\ast} : \Lambda_g=\pi_1S_g &\rightarrow & \pi_1E =\ZZ^2&\\
a_1,a_2&\mapsto &a&\\
b_1,b_2 &\mapsto &b&\\
a_i,b_i&\mapsto &0 &\mbox{   }\forall i =3,\cdots,g\\
\end{array}
\] 
with respect to presentations $\Lambda _g =\left\langle a_1,b_1,\cdots,a_g,b_g\mid \left[a_1,b_1\right]\cdots \left[a_g,b_g\right]\right\rangle$ and $\pi_1 E = \left\langle a,b\mid \left[a,b\right]\right\rangle$ of the fundamental groups.

Then for $g_1,\cdots,g_r\geq 2$ and $r\geq 3$, use addition in the elliptic curve to define the map $f=\sum _{i=1}^r f_{g_i}: S_{g_1}\times \cdots \times S_{g_r}\rightarrow E$.

The maps $f_{g_i}$ are branched coverings inducing surjective maps on fundamental groups. They restrict to unramified normal coverings of $E\setminus B_i$ which correspond to kernels of homomorphisms mapping a set of generators $\alpha,\beta:\left[0,1\right]\rightarrow E\setminus B_i$ of $\pi_1E$ to zero. It follows that $f_{g_i}$ is purely branched for $i=1,\cdots,r$. In particular they are surjective on fundamental groups.

Hence, by Theorem \ref{thmFinProp}, the kernel of the induced map $f_{\ast}: \Lambda_{g_1}\times \cdots \times \Lambda_{g_r}\rightarrow \ZZ^2$ is projective (and thus K\"ahler) of finiteness type $\mathcal{F}_{r-1}$, but not $\mathcal{F}_r$. Observe that the map $f_{\ast}$ is indeed identical with the map $\phi_{g_1,\cdots,g_r}$ defined in \eqref{eqnDPSmap}.

We will imitate the construction of these groups in order to produce holomorphic maps $\overline{f}_{h_1},\cdots, \overline{f}_{h_r}:S_{h_i}\rightarrow E$ for all $h_1,\cdots,h_r \geq 2$ and $r\geq 3$ such that $\overline{f}=\sum_{i=1}^r \overline{f}_{h_i}$ realises the map $\psi_{h_1,\cdots, h_r}$ defined in \eqref{eqnOurExamples}. We will present two different constructions, each of which has advantages.

\vspace{1cm}

\textbf{Construction 1}: This is the more natural construction. It has the advantage that the maps $\overline{f}_{h_i}$ are normal branched coverings, but it comes at the cost that the singularities of $\overline{f}$ are not quadratic and therefore $\overline{f}$ is not a Morse function.

As above, let $E$ be an elliptic curve and let $B=\left\{d_1,d_2\right\}\subset E$ be two arbitrary points. Let $\alpha, \beta, \gamma _1, \gamma _2$ be generators for the homology group $H_1(E\setminus B, \ZZ)$ of the form described in the paragraph preceding Definition \ref{defpurelybranched}, $\alpha, \beta$ generate $\pi_1 E$ and have simple closed representatives in $E\setminus B$ intersecting positively with respect to the orientation induced by the complex structure on $E$, and $\gamma_1, \gamma _2$ are the positively oriented boundary loops of small discs around $b_1$, respectively $b_2$. For $h\geq 2$, the surjective homomorphism $H_1(E\setminus B,\ZZ)\rightarrow \ZZ/ h\ZZ$ defined by $\alpha, \beta \mapsto 0$, $\gamma_1\mapsto 1$, $\gamma_2\mapsto -1$, defines a $h$-fold normal branched covering $\overline{f}_{h}: S_h \rightarrow E$ from a topological surface of genus $h$ with branching locus $B$. We may assume that, after connecting the generators of $H_1(E\setminus B,\ZZ)$ to a base point, the fundamental group of $E\setminus B$ is 
\[
\pi_1 \left(E\setminus B\right) = \left\langle \alpha,\beta,\gamma_1,\gamma_2\mid \left[\alpha,\beta\right]\gamma_1\gamma_2\right\rangle.
\]

It is well-known that there is a unique complex structure on $S_h$ such that the map $\overline{f}_h$ is holomorphic. Denote by {\small $C=\left\{c_1=\overline{f}_h^{-1}(d_1), c_2=\overline{f}_h^{-1}(d_2)\right\} \subset S_h$} the set of critical points of $\overline{f}_h$. 

In analogy to the DPS groups, we define the map $\overline{f}$ using the additive structure on $E$,
\[
\overline{f}=\sum_{i=1}^r \overline{f}_{h_i}: S_{h_1}\times \cdots \times S_{h_r} \rightarrow E
\]
for all $r\geq 3$ and $h_1,\cdots, h_r\geq 2$.

The maps $\overline{f}_{h_i}$ are branched coverings induced by the surjective composition of homomorphisms $\pi_1 (E\setminus B)\rightarrow H_1(E\setminus B,\ZZ)\rightarrow \ZZ/h_i\ZZ$ defined by $\alpha, \beta \mapsto 0$, $\gamma_1\mapsto 1$, $\gamma_2\mapsto -1$. In particular, all of the $\overline{f}_{h_i}$ are purely branched, since $\alpha$ and $\beta$ are elements of the kernel of this homomorphism, which is a normal subgroup of $\pi_1(E\setminus B)$. It follows that all of the $\overline{f}_{h_i}$ are surjective on fundamental groups.

Hence, Theorem \ref{thmFinPropGen} can be applied to $\overline{f}$. This implies that the fundamental group $\pi_1H$ of the generic fibre $H$ of $\overline{f}$ is a projective (and thus K\"ahler) group of finiteness type $\mathcal{F}_{r-1}$, but not of finiteness type $\mathcal{F}_r$.

Since all of the $\overline{f}_{h_i}$ are purely branched, any lift of a generator $\alpha,\beta:\left[0,1\right]\rightarrow E\setminus B_i$ of $\pi_1E$ to $S_{h_i}$ is a loop. Choose a fundamental domain $F\subset S_{h_i}$ for the $\ZZ/ h_i \ZZ$-action such that the images of $\alpha$ and $\beta$ are contained in the image $\overline{f}_{h_i}(U)$ of an open subset $U\subset F$ on which $\overline{f}_{h_i}$ restricts to a homeomorphism. 

Denote the $h_i$ lifts of $\alpha$ by $a_1^{(i)},\cdots, a_{h_i}^{(i)}$ and the $h_i$ lifts of $\beta$ by $b_1^{(i)},\cdots b_{h_i}^{(i)}$, where we choose lifts so that $a_j^{(i)}$ and $b_j^{(i)}$ are in the interior of the fundamental domain $j\cdot F$ for $j \in \ZZ/ h_i \ZZ$. In particular, the loops $a_1^{(i)},b_1^{(i)},\cdots,a_{h_i}^{(i)},b_{h_i}^{(i)}$ form a standard symplectic basis for the (symplectic) intersection form on $H_1(S_{h_i},\ZZ)$.

It is then well-known that we can find generators $\alpha_1^{(i)},\beta_1^{(i)},\cdots \alpha_{h_i}^{(i)},\beta_{h_i}^{(i)}$ of $\pi_1 S_{h_i}$ such that the abelianisation is given by $\alpha_j^{(i)}\mapsto a_j^{(i)}$, $\beta_j^{(i)}\mapsto b_j^{(i)}$ and
\[
 \pi_1S_{h_i} = \left\langle \alpha_1^{(i)},\beta_1^{(i)},\cdots \alpha_{h_i}^{(i)},\beta_{h_i}^{(i)}\mid \left[\alpha_1^{(i)},\beta_1^{(i)}\right] \cdots \left[\alpha_{h_i}^{(i)},\beta_{h_i}^{(i)}\right]\right\rangle.
\]
This is for instance an easy consequence of Theorem \ref{thmMCGSp}.

\begin{figure}[ht]
\centering{
\includegraphics[width=10cm,keepaspectratio]{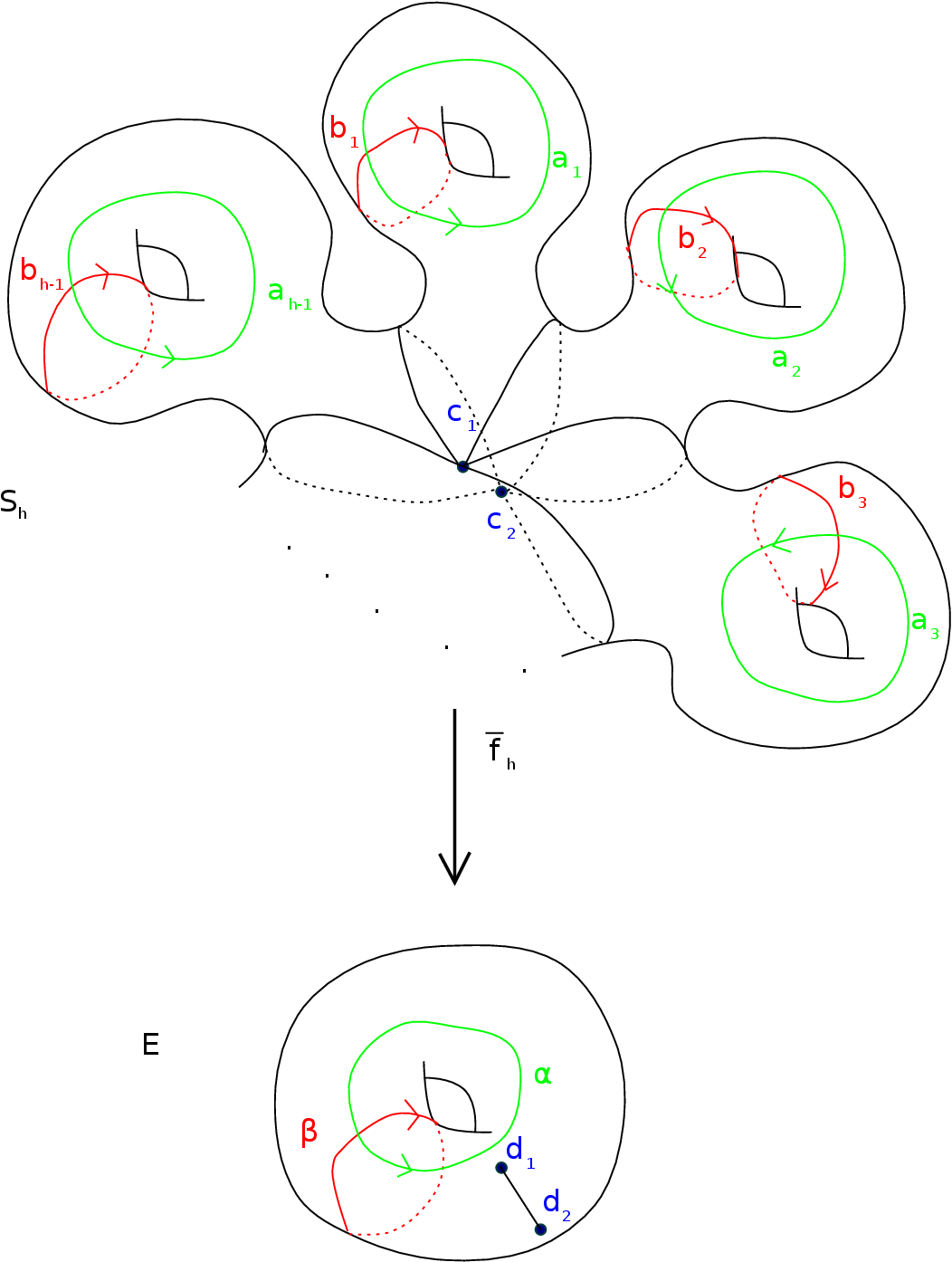}
\caption{The $h$-fold branched normal covering $\overline{f}_h$ of $E$ in Construction 1}
\label{fignfoldbranched}
}
\end{figure}

With respect to this presentation, the map on fundamental groups induced by $\overline{f}_{h_i}$ is given by
\[
 \begin{array}{cccc}
  \overline{f}_{h_i\ast}:& \pi_1 S_{h_i} &\rightarrow & \pi_1 E\\
  &\alpha_j^{(i)} &\mapsto & \alpha\\
  &\beta _j^{(i)} &\mapsto & \beta\\
 \end{array}
\]
For an illustration of the map $\overline{f}_{h}$ and the generators $\alpha_j,\beta_{j}$, see Figure \ref{fignfoldbranched}.

As a direct consequence, we obtain that $\overline{f}$ induces the map
\[
  \begin{array}{cccc}
  \overline{f}_{\ast}:& \pi_1 S_{h_1}\times \cdots \times \pi_1 S_{h_r} &\rightarrow & \pi_1 E\\
  &\alpha_j^{(i)} &\mapsto & \alpha\\
  &\beta _j^{(i)} &\mapsto & \beta\\
 \end{array}
\]
on fundamental groups. Thus, the induced map $\overline{f}_{\ast}$ on fundamental groups is indeed the map $\psi_{h_1,\cdots, h_r}$ in \eqref{eqnOurExamples}.

\vspace{1cm}

\textbf{Construction 2:} We will now give an alternative construction which realises $\psi_{h_1,\cdots,h_r}$ as the fundamental group of the generic fibre of a fibration over an elliptic curve with Morse type singularities only. This is at the expense of the maps being regular branched coverings rather than normal branched coverings. 

Let $E$ be an elliptic curve, let $h\geq 2$, let $d_{1,1},d_{1,2},d_{2,1},\cdots , d_{h-1,1},d_{h-1,2}$ be $2(h-1)$ points in $E$ and let $s_1,\cdots, s_{h-1}: \left[0,1\right]\rightarrow E$ be simple, pairwise non-intersecting paths with starting point $s_i(0)=d_{i,1}$ and endpoint $s_i(1)=d_{i,2}$ for $i=1,\cdots,h-1$. Take $h$ copies $E_0$, $E_1,\cdots, E_{h-1}$ of $E$, cut $E_0$ open along all of the paths $s_i$ and cut $E_i$ open along the path $s_i$ for $1\leq i \leq h-1$. This produces surfaces $F_0,\cdots, F_{h-1}$ with boundary.

Glue the boundary of $F_i$ to the boundary component of $F_0$ corresponding to the cut produced by the path $s_i$ where we identify opposite edges with respect to the identity homeomorphism $E_0\rightarrow E_i$ for $i=1,\cdots, h-1$. This yields a closed genus $h$ surface $R_h$ together with a $(h-1)$-fold branched covering map $f'_h: R_h\rightarrow E$ with critical points $c_{1,1}$, $c_{1,2}$, $c_{2,1}$,$\cdots$, $c_{h-1,1}$, $c_{h-1,2}$ $\in R_h$, $f'(c_{i,j})=d_{i,j}$ of order two. Endow $R_h$ with the unique complex structure that makes the map $f'_h$ holomorphic.

\begin{figure}[ht]
\centering{
\includegraphics[width=10cm,keepaspectratio]{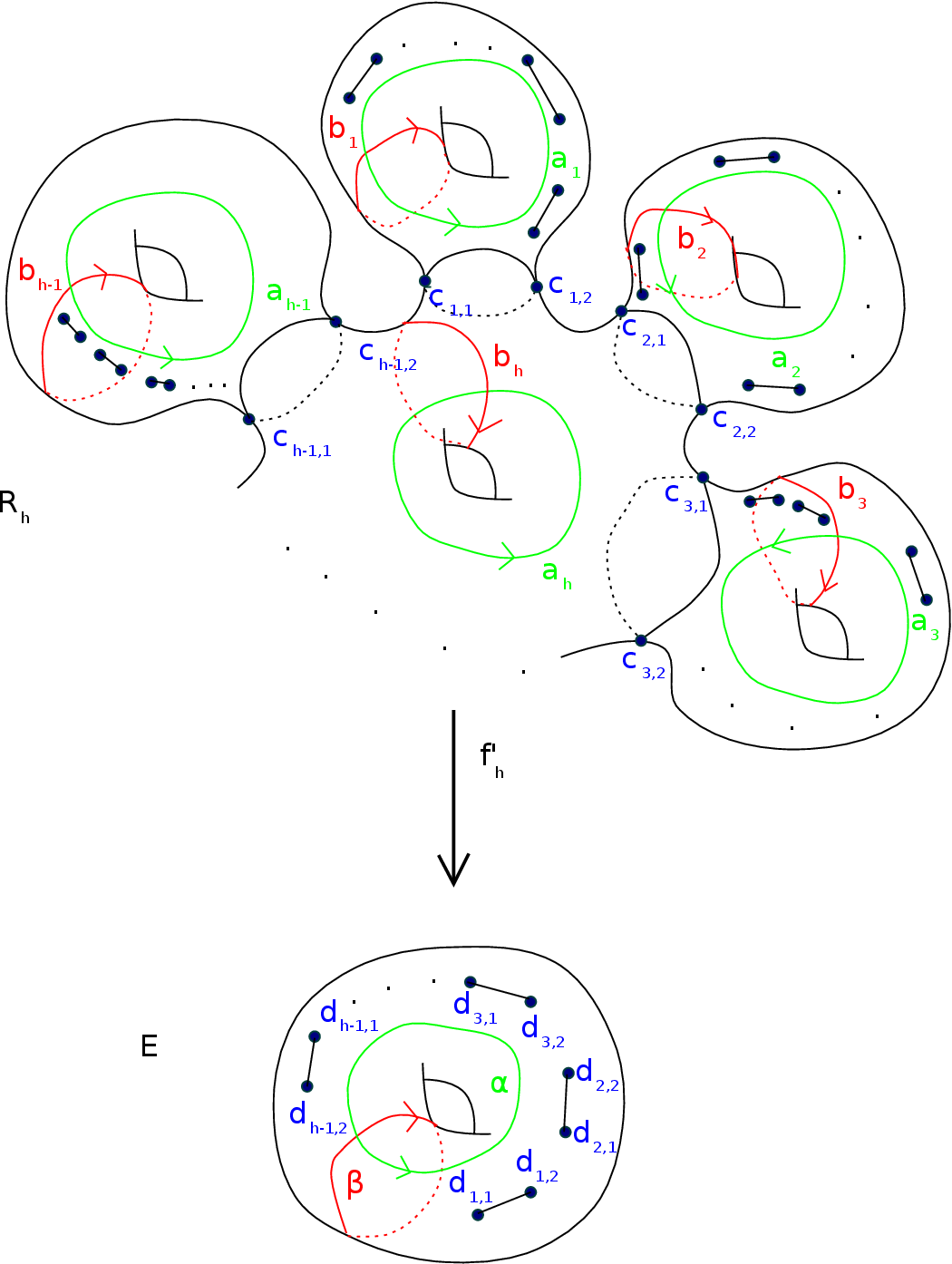}
\caption{The $h$-fold branched covering $f'_h$ of $E$ with Morse type singularities in Construction 2}
\label{figMorsetype}
}
\end{figure}

It is clear that the map $f'_h$ is purely branched and surjective on fundamental groups: in analogy to Construction 1 we find standard generating sets $\alpha,\beta$ of $\pi_1 E$ and $\alpha_1,\cdots,\alpha_h,\beta_1,\cdots,\beta_h$ of $\pi_1 R_h$ with respect to which the induced map on fundamental groups is given by $\alpha_i\mapsto \alpha$ and $\beta_i\mapsto \beta$. See Figure \ref{figMorsetype} for an illustration of the map $f'_h$.

For $h_1,\cdots,h_r\geq 2$, $r\geq 3$, the holomorphic map $f'= \sum _{i=1}^r f'_{h_i}: R_{h_1}\times \cdots \times R_{h_r}\to E$ induces the map $\psi_{h_1,\cdots,h_r}$ on fundamental groups. The map $f'$ has isolated singularities and connected fibres and in fact we can see, by considering local coordinates around the singular points, that all singularities of $f'$ are of Morse type.

\section{Reducing the isomorphism type of our examples to Linear Algebra}
\label{secComparison}

We will now show that our groups are not isomorphic to Dimca, Papadima and Suciu's groups and thus provide genuinely new examples rather than being their examples in disguised form.

As before, let $\Lambda_{g}$ be the fundamental group of a closed orientable surface of genus $g$. For $r,k \geq 1$, consider epimorphisms $\phi_{g_i}: \Lambda_{g_i}\rightarrow \ZZ^k$ and $\psi_{h_i}:\Lambda_{h_i}\rightarrow \ZZ^k$, where $g_i, h_i \geq 2$ and $1\leq i\leq r$. Recall that $\mathrm{ker}(\phi_{g_i})$ and $\mathrm{ker}(\psi_{h_i})$ are infinitely generated free groups, since they are infinite index normal subgroups of surface groups.

Define maps $\phi_{g_1,\cdots,g_r}=\phi_{g_1}+\cdots + \phi_{g_r}:\Lambda_{g_1}\times \cdots \times \Lambda_{g_r}\rightarrow \ZZ^k$ and $\psi _{h_1,\cdots,h_r}=\psi_{h_1}+\cdots + \psi_{h_r}:\Lambda_{h_1}\times\cdots\times \Lambda_{h_r}\rightarrow \ZZ^k$ and let
\[
L_i:= \Lambda_{g_i}\cap \mathrm{ker}(\phi_{g_1,\cdots,g_r}) = \mathrm{ker}(\phi_{g_i}),
\]
\[
K_i:= \Lambda_{h_i}\cap \mathrm{ker}(\psi_{h_1,\cdots,h_r}) = \mathrm{ker}(\psi_{h_i}).
\]

Then the following Lemma is a special case of \cite[Theorem C (3),(4)]{BriHowMilSho-13}

\begin{lemma}
Every isomorphism
\[
\theta : \mathrm{ker}(\phi_{g_1,\cdots,g_r})\rightarrow \mathrm{ker}(\psi_{h_1,\cdots,h_r})
\]
satisfies $\theta(L_i)=K_i$ up to reordering of the factors. In particular, we obtain that $\theta$ restricts to an isomorphism $L_1 \times \cdots \times L_r \cong K_1\times \cdots \times K_r$.

Furthermore, with the same reordering of factors, we have $\theta((\Lambda_{g_{i_1}}\times \cdots \Lambda_{g_{i_k}})\cap \mathrm{ker}(\phi_{g_1,\cdots,g_r}))= (\Lambda_{h_{i_1}}\times \cdots \Lambda_{h_{i_k}})\cap \mathrm{ker}(\psi_{h_1,\cdots,h_r})$ for $1\leq k\leq r$ and $1\leq i_1 <\cdots < i_k\leq r$.
\label{lemCharacteristic}
\end{lemma}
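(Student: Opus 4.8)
This is exactly the \emph{canonicity of the coordinate subgroups} of a full subdirect product of surface groups, so after checking the hypotheses one may simply invoke the cited theorem of Bridson--Howie--Miller--Short; however in the case at hand I would give a direct argument, whose idea is to characterise the $L_i$ and the partial products intrinsically inside $\mathrm{ker}(\phi_{g_1,\ldots,g_r})$ so that any isomorphism is forced to preserve them. Throughout write $S_i=\Lambda_{g_i}$, $G=\mathrm{ker}(\phi_{g_1,\ldots,g_r})\leq \prod_i S_i$ and, for $I\subseteq\{1,\ldots,r\}$, set $G_I := G\cap\prod_{i\in I}S_i$, so that $G_{\{i\}}=L_i$ and $G_I$ is the subgroup appearing in the final assertion.

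I would first record two inputs. Because $r\geq 3$ and the $f_{g_i}$ are surjective, $G$ is a full subdirect product and, more generally, every $G_I$ is non-trivial in each of its coordinates (given a target in one factor of $I$, a second factor of $I$, or the free group $L_i=\mathrm{ker}(f_{g_i})\neq 1$ when $|I|=1$, absorbs the obstruction); in particular each projection $\pi_i:G\to S_i$ is onto. The second input consists of standard facts about a surface group $S$ of genus $\geq 2$: it has trivial centre and no non-trivial abelian normal subgroup, and the centralizer in $S$ of any non-abelian subgroup is trivial (two non-commuting elements already have trivial common centralizer, since centralizers of non-trivial elements are cyclic).

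The main step is to determine the lattice $\mathcal N=\{N\trianglelefteq G : N=C_G(C_G(N))\}$ of centralizer-closed normal subgroups. The key observation is the coordinatewise formula $C_{\prod S}(M)=\prod_i C_{S_i}(\pi_i M)$, valid because commuting is tested coordinate by coordinate and hence only through the projections. Given $N\trianglelefteq G$ with support $J=\{i:\pi_i N\neq 1\}$, each $\pi_i N$ with $i\in J$ is a non-trivial normal subgroup of $S_i$ (as $\pi_i$ is onto), hence non-abelian, so $C_{S_i}(\pi_i N)=1$; this yields $C_G(N)=G_{J^c}$ and then, applying the same computation to $G_{J^c}$, $C_G(C_G(N))=C_G(G_{J^c})=G_J$. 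Thus every member of $\mathcal N$ equals $G_J$ for its support $J$, and conversely each $G_J$ lies in $\mathcal N$ with $C_G(G_J)=G_{J^c}$. Hence $J\mapsto G_J$ is an isomorphism from the Boolean lattice $2^{\{1,\ldots,r\}}$ onto $\mathcal N$, whose atoms are precisely the $L_i$.

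Finally I would run the same analysis for $G'=\mathrm{ker}(\psi_{h_1,\ldots,h_r})$ and observe that any isomorphism $\theta$ preserves normality and centralizers, so $\theta(C_G(N))=C_{G'}(\theta N)$ and $\theta$ carries $\mathcal N$ isomorphically onto the corresponding lattice for $G'$. A lattice isomorphism of Boolean lattices matches atoms to atoms, forcing $\theta(L_i)=K_{\sigma(i)}$ for a permutation $\sigma$; reindexing the factors by $\sigma$ gives $\theta(L_i)=K_i$. The last assertion is then formal: $G_I=C_G(C_G(\prod_{i\in I}L_i))$, so $\theta(G_I)=C_{G'}(C_{G'}(\prod_{i\in I}K_i))=(\Lambda_{h_{i_1}}\times\cdots\times\Lambda_{h_{i_k}})\cap\mathrm{ker}(\psi_{h_1,\ldots,h_r})$. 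The main obstacle is the classification of $\mathcal N$, and it rests entirely on the fact that the projections of a \emph{normal} subgroup are normal and therefore non-abelian; this is exactly where the hypotheses $g_i,h_j\geq 2$ (non-elementary surface groups) and $r\geq 3$ (non-triviality of the partial products) enter, and it is the concrete content of the cited Bridson--Howie--Miller--Short theorem in our setting.
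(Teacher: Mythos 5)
Your argument is correct, and it takes a genuinely different route from the paper: for this lemma the paper gives no argument at all, quoting it as a special case of Theorem C (3),(4) of Bridson--Howie--Miller--Short \cite{BriHowMilSho-13}, whereas you reprove the relevant special case from scratch. The key steps of your double-centralizer argument all check out: the projections $\pi_i(N)$ of a normal subgroup $N$ of $G$ with support $J$ are normal in $S_i$ because $\pi_i|_G$ is onto (this uses surjectivity of the $f_{g_j}$); a non-trivial normal subgroup of a genus $\geq 2$ surface group is non-abelian (an infinite cyclic normal subgroup would force a finite-index centralizer, impossible when centralizers of non-trivial elements are cyclic); a non-abelian subgroup has trivial centralizer; hence $C_G(N)=G\cap\prod_i C_{S_i}(\pi_i N)=G_{J^c}$ and $C_G(C_G(N))=G_J$, so $\mathcal{N}$ is a Boolean lattice with atoms the $L_i$, which any abstract isomorphism must preserve, and the statement about partial products follows formally from $G_I=C_G(C_G(\prod_{i\in I}L_i))$. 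As to what each route buys: the citation is economical and rests on a much more general theory (subdirect products of limit groups), which is the sensible choice in a paper whose focus is elsewhere; your proof is self-contained and elementary, it isolates exactly which properties of surface groups are used (trivial centre, cyclic centralizers of non-trivial elements, surjectivity of the coordinate maps), so it generalises verbatim to direct products of torsion-free hyperbolic groups, and it gives for free that the number of direct factors is an isomorphism invariant of the kernel, since Boolean lattices of different ranks are not isomorphic. Two small corrections you should make: $r\geq 3$ plays no role in your argument --- $r\geq 2$ already gives surjectivity of the projections, and fullness $L_i=\mathrm{ker}(f_{g_i})\neq 1$ comes from $g_i\geq 2$ (a non-abelian surface group cannot inject into $\ZZ^k$), not from the number of factors; and the centralizer formula should be stated and used inside $G$, i.e. $C_G(M)=G\cap\prod_i C_{S_i}(\pi_i M)$, since it is $C_G$, not the centralizer in the ambient product, that $\theta$ transports.
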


\begin{remark}
It also follows from \cite[Theorem C (3),(4)]{BriHowMilSho-13} that if we have an isomorphism between direct products of $r$ surface groups $\Lambda_{g_1}\times\cdots\times \Lambda_{g_r}$ and $\Lambda_{h_1}\times \cdots\times \Lambda_{h_r}$, then after reordering of the factors it is induced by isomorphisms $\Lambda _{g_i}\cong \Lambda_{h_i}$ and, in particular, $g_i=h_i$ for $i=1,\cdots,r$.
\label{rmkIsomProd}
\end{remark}

We obtain the following consequence

\begin{proposition}
There is an isomorphism of the short exact sequences
\[
1\rightarrow \mathrm{ker}(\phi_{g_1,\cdots,g_r})\rightarrow \Lambda_{g_1}\times \cdots\times \Lambda_{g_r} \xrightarrow[]{\phi_{g_1,\cdots,g_r}} \ZZ^k\rightarrow 1
\]
and
\[
1\rightarrow \mathrm{ker}(\psi_{h_1,\cdots,h_r})\rightarrow \Lambda_{h_1}\times\cdots\times \Lambda_{h_r}\xrightarrow[]{\psi_{h_1,\cdots,h_r}} \ZZ^k\rightarrow 1
\]
if and only if (up to reordering factors) $g_1=h_1, \cdots, g_r=h_r$ and there are isomorphisms of the short exact sequences
\[
1\rightarrow \mathrm{ker}(\phi_{g_i})\rightarrow \Lambda_{g_i}\xrightarrow[]{\phi_{g_i}} \ZZ^{k}\rightarrow 1
\]
and
\[
1\rightarrow \mathrm{ker}(\psi_{h_i})\rightarrow \Lambda_{h_i}\xrightarrow[]{\psi_{h_i}} \ZZ^{k}\rightarrow 1
\]
for all $i=1,\cdots, r$ such that the isomorphism $A:\ZZ^k\rightarrow \ZZ^k$ is independent of $i$.
\label{thmIsomSESeq}
\end{proposition}

\begin{proof}
The if direction follows immediately by taking the Cartesian product of the automorphisms $\theta_i:\Lambda_{g_i}\rightarrow \Lambda_{g_i}$ to be the automorphism of $\Lambda_{g_1}\times\cdots \times \Lambda_{g_r}$ and the identity map to be the automorphism of $\ZZ^k$.

For the only if direction we use that, by Remark \ref{rmkIsomProd}, the isomorphism $\Lambda_{g_1}\times\cdots\Lambda_{g_r}\rightarrow \Lambda_{h_1}\times\cdots\times \Lambda_{h_r}$ is realised by a direct product of isomorphisms $\theta_i: \Lambda_{g_i}\rightarrow \Lambda_{h_i}$, after possibly reordering factors. Restricting to factors then implies that the automorphism $\theta_i$ of $\Lambda_{g_i}$ and the identity on $\ZZ^k$ induce an isomorphism of short exact sequences for $i=1,\cdots,r$.
\end{proof}

\begin{proposition}
 Let $r\geq 2$. The groups $H_1= \mathrm{ker}(\phi_{g_1,\cdots,g_r})$ and $H_2=\mathrm{ker}(\psi_{h_1,\cdots,h_s})$ are isomorphic if and only if there is an isomorphism of the short exact sequences 
 \[
1\rightarrow \mathrm{ker}(\phi_{g_1,\cdots,g_r})\rightarrow \Lambda_{g_1}\times \cdots\times \Lambda_{g_r} \xrightarrow[]{\phi_{g_1,\cdots,g_r}} \ZZ^k\rightarrow 1
\]
and
\[
1\rightarrow \mathrm{ker}(\psi_{h_1,\cdots,h_s})\rightarrow \Lambda_{h_1}\times\cdots\times \Lambda_{h_s}\xrightarrow[]{\psi_{h_1,\cdots,h_s}} \ZZ^k\rightarrow 1.
\]
\label{thmAbstractIsom}
\end{proposition}
\begin{proof}
 Let $\theta : H_1\rightarrow H_2$ be an abstract isomorphism of groups and let $L_i\leq H_1$, $K_i\leq H_2$ be as above. By Lemma \ref{lemCharacteristic}, we may assume that after reordering factors $\theta(L_i)\leq K_i$ and that $\theta(M_1)=M_2$ for $M_1=H_1 \cap (1\times \Lambda_{g_2}\times \cdots \times \Lambda_{g_r})$, $M_2=H_2\cap (1\times \Lambda_{h_2}\times \cdots \times \Lambda_{h_s})$.
 
Since $\phi_{g_i}$, $\psi_{h_j}$ are surjective for all $1\leq i\leq r$, $1\leq j\leq s$, we obtain that 
\[
\begin{array}{ll}
H_1/M_1 \cong \Lambda_{g_1}, \mbox{        }& H_1/L_1\cong \Lambda_{g_2}\times \cdots \times, \Lambda_{g_r}\\
H_2/M_2 \cong \Lambda_{h_1}, & H_2/K_1 \cong \Lambda _{h_2}\times \cdots \times \Lambda_{h_s},
\end{array}
\]
where the isomorphisms are induced by the projection maps.

In particular, the map $\theta$ induces an isomorphism of short exact sequences
{\fontsize{8}{15}
\[
\xymatrix{1 \ar[r]  & K_1 \ar[r] \ar[d]^{\cong}_{\theta}  & H_1/M_1\times H_1/L_1 \ar[d]^{\cong}_{\theta}\ar@{}[r]\cong &*+[l]{ \Lambda_{g_1}\times \cdots \times \Lambda_{g_r}} \ar[r] & \ZZ^k \ar[r]\ar[d]^{\cong}_{\overline{\theta}} & 1\\ 
	  1 \ar[r] & K_2 \ar[r] & H_2/M_2\times H_2/K_1\cong &*+[l]{ \Lambda_{h_1}\times \cdots \times \Lambda_{h_s}} \ar[r]  & \ZZ^k \ar[r]& 1 \\}
\]}
proving the only if direction. The if direction is trivial. 
\end{proof}

Note that Proposition \ref{thmAbstractIsom} is well-known (see for instance \cite[Section 7.5]{BriHowMilSho-13}). Observe that the result does not hold for $r=1$: every infinite index normal subgroup of a hyperbolic surface group is a non-finitely generated free group; thus we get isomorphisms between the kernels of any two epimorphisms $\phi_{g_1}:\Lambda_{g_1}\rightarrow \ZZ^k$ and $\psi_{h_1}:\Lambda _{h_1}\rightarrow \ZZ^k$, with $g_1,h_1\geq 2$, while clearly the corresponding short exact sequences are not isomorphic for $g_1\neq h_1$.

It follows that in order to understand abstract isomorphisms of the kernels of maps of the form $\phi_{g_1,\cdots,g_r}$ and $\psi_{h_1,\cdots,h_r}$ with $r\geq 2$, it suffices to understand isomorphisms of short exact sequences of the form
\[
1\rightarrow N\rightarrow \Lambda_{g}\xrightarrow[]{\nu_g} \ZZ^k\rightarrow 1.
\]

The latter reduces to Linear Algebra. We will explain this fact briefly. A detailed exposition of the subject can be found in \cite{FarMar-12} (see in particular Chapter 6). In the following let $S_g$ be a closed surface of genus $g$ and let $\Lambda_g=\pi_1(S_g)$ be its fundamental group.

Let $MCG^{\pm}(S_g)$ be the \textit{(extended) mapping class group} of $S_g$, that is, the group of homeomorphisms of $S_g$ up to homotopy equivalences, where by the extended mapping class group we mean that we allow orientation reversing homeomorphisms. Let $Inn(\Lambda_g)$ be the group of \textit{inner automorphisms} of $\Lambda_g$, that is, automorphisms of the form $g\mapsto h^{-1}gh$ for a fixed $h\in \Lambda_g$ and let $Out(\Lambda _g)=Aut(\Lambda_g)/Inn(\Lambda_g)$ be the group of \textit{outer automorphisms} of $\Lambda _g$. 

The map $\Lambda_g\rightarrow \ZZ^k$ splits through the abelianisation $H_1(\Lambda_g,\ZZ)$ of $\Lambda_g$. Every automorphism $\tau$ of $\Lambda_g$ induces an automorphism $\tau_*\in GL(2g,\ZZ)$ of the abelianisation $H_1(\Lambda_g,\ZZ)$. Since inner automorphisms act trivially on $H_1(\Lambda_g,\ZZ)$, the induced homomorphism $Aut(\Lambda_g)\rightarrow GL(2g,\ZZ)$ splits through $Out(\Lambda_g)$. 

By the Dehn-Nielsen-Baer Theorem (cf. \cite[Theorem 8.1]{FarMar-12}) the natural map $MCG^{\pm}(S_g) \rightarrow Out(\Lambda _g)$ is an isomorphism. In particular, we can realise any element of $Out(\Lambda_g)$ by a (up to homotopy) unique homeomorphism of $S_g$. It follows that for $\tau \in Aut(\Lambda _g)$, the map $\tau_*\in GL(2g,\ZZ)$ is realised by some homeomorphism $\alpha\in MCG^{\pm}(S_g)$. 

There is a natural symplectic form on $H_1(\Lambda_g,\ZZ)$ induced by taking intersection numbers of representatives in $S_g$. Orientation preserving homeomorphism $\alpha \in MCG^+(S_g)$ of $S_g$ preserve intersection numbers. Hence, the induced automorphism $\alpha_*\in H_1(\Lambda_g,\ZZ)$ preserves the symplectic form which is equivalent to $A:=\alpha_*\in Sp(2g,\ZZ)$, where $Sp(2g,\ZZ)$ is the group of \textit{symplectic matrices} of dimension $2g$ with integer coefficients. It is defined by

\[
Sp(2g,\ZZ)=\left\{A\in M_{2g}(\ZZ) \mid A^tJ_{2g}A=J_{2g}\right\},
\]

for $J_{2g}$ the matrix representing the standard symplectic form given by the block diagonal matrix

\[
J_{2g}=\left(\begin{array}{ccccc}
J&0&\cdots& &0\\
0&J&0& \cdots &0\\
0&0&\ddots&\cdots &0\\
\vdots&\vdots &&\cdots &0\\
0& \cdots &\cdots&0&J\\
\end{array}\right)
\]
with $J_2=J=\left(\begin{array}{cc} 0&1\\ -1 & 0\end{array}\right)$ the standard symplectic form on $\RR ^2$.

For an orientation reversing homeomorphism we have that $A^tJA = -J$. We define the generalised symplectic group of dimension $2g$ with integer coefficients by

\[
Sp^{\pm}(2g,\ZZ)= \left\{A\in M_{2g}(\ZZ) \mid A^tJA=J \mbox{  or  } A^tJA=-J \right\}.
\]

As a result, there is a natural homomorphism $$\Psi: MCG^{\pm}(S_g)\rightarrow Sp^{\pm}(2g,\ZZ).$$

\begin{theorem}[{\cite[Theorem 6.4]{FarMar-12}}]
The symplectic representation $\Psi: MCG^{\pm}(S_g) \rightarrow Sp^{\pm}(2g,\ZZ)$ is surjective for $g \geq 1$.
\label{thmMCGSp}
\end{theorem}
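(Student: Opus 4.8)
The plan is to realize $Sp^{\pm}(2g,\ZZ)$ by mapping classes in two stages: first surject onto the index-two subgroup $Sp(2g,\ZZ)$ using orientation-preserving homeomorphisms, and then produce a single orientation-reversing homeomorphism to cover the remaining coset.

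First I would record the action of a Dehn twist on homology. If $c\subset S_g$ is a simple closed curve with class $[c]\in H_1(S_g,\ZZ)$, then the Dehn twist $T_c$ acts on homology by the symplectic transvection $\gamma\mapsto \gamma + \langle\gamma,[c]\rangle\,[c]$, where $\langle\cdot,\cdot\rangle$ is the algebraic intersection form. This follows from the local model of a twist supported in an annular neighbourhood of $c$: only the part of $\gamma$ crossing $c$ is altered, and the alteration is a multiple of $[c]$ equal to the signed crossing number. In particular $\Psi(T_c)$ is exactly the transvection $t_{[c]}$ along the primitive vector $[c]$, which manifestly lies in $Sp(2g,\ZZ)$, consistent with $T_c\in MCG^+(S_g)$.

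Next I would invoke the purely algebraic fact that $Sp(2g,\ZZ)$ is generated by symplectic transvections $t_v:x\mapsto x+\langle x,v\rangle v$ as $v$ ranges over a finite set of primitive integral vectors, for instance the $2g$ vectors $a_i,b_i$ of a standard symplectic basis together with the ``connecting'' classes $b_i+b_{i+1}$ needed to link the consecutive hyperbolic planes. By the change of coordinates principle every primitive class $v\in H_1(S_g,\ZZ)$ is represented by a nonseparating simple closed curve $c_v$, and then $\Psi(T_{c_v})=t_v$. Realizing each generator of $Sp(2g,\ZZ)$ in this way shows that $\Psi$ maps $MCG^+(S_g)$ onto $Sp(2g,\ZZ)$. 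Finally I would treat the orientation-reversing part: the condition $A^tJA=\pm J$ partitions $Sp^{\pm}(2g,\ZZ)$ into the two cosets of $Sp(2g,\ZZ)$, and choosing any orientation-reversing homeomorphism $\rho$ of $S_g$, concretely a reflection across a plane of symmetry, its induced map satisfies $\rho_*^tJ\rho_*=-J$, so $\Psi(\rho)$ lies in the nontrivial coset. Together with the surjection onto $Sp(2g,\ZZ)$ this exhausts both cosets.

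The main obstacle is the generation statement for $Sp(2g,\ZZ)$ by transvections along primitive vectors, together with the verification that the chosen generating vectors are indeed primitive and hence geometrically realizable by nonseparating simple closed curves. The reduction-theory argument producing an explicit finite transvection generating set, and confirming that it fills out $Sp(2g,\ZZ)$, is the technical heart of the proof; by contrast the homological formula for Dehn twists and the orientation-reversing coset argument are comparatively routine.
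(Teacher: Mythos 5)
This theorem is not proved in the paper at all: it is imported verbatim from Farb--Margalit \cite{FarMar-12} (Theorem 6.4), so there is no internal argument to compare yours against. Judged on its own, your proposal reconstructs what is essentially the proof in that cited source: $\Psi$ sends the Dehn twist $T_c$ to the transvection along $[c]$; a classical algebraic theorem (Burkhardt) supplies a finite set of transvections along primitive vectors generating $Sp(2g,\ZZ)$; the change of coordinates principle realizes every primitive class by a nonseparating simple closed curve; and a single orientation-reversing homeomorphism, which negates the intersection form, sweeps out the second coset of $Sp(2g,\ZZ)$ in $Sp^{\pm}(2g,\ZZ)$. The coset step is a genuine (if routine) addition, since the statement in Farb--Margalit concerns the orientation-preserving groups, while the paper needs the extended version $MCG^{\pm}(S_g)\rightarrow Sp^{\pm}(2g,\ZZ)$; your argument correctly bridges that gap. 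The one soft spot is exactly the one you flag: the generation of $Sp(2g,\ZZ)$ by transvections is invoked rather than proved, and your candidate generating set (transvections along $a_i$, $b_i$, and the connecting classes $b_i+b_{i+1}$, i.e.\ the homology classes of the Lickorish curves) is asserted without verification; be aware that deducing this generation statement from the fact that the Lickorish twists generate $MCG^+(S_g)$ would be circular, since that deduction presupposes the surjectivity you are trying to prove. Quoting Burkhardt's theorem with its exact generating set closes this gap, which is precisely what Farb--Margalit do.
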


Combining this with the isomorphism $MCG^{\pm}(S_g)\rightarrow Out(S_g)$, we obtain

\begin{corollary}
For $g\geq 1$, the symplectic representation $\Psi$ induces a surjective representation $Out(S_g)\rightarrow Sp^{\pm}(2g,\ZZ)$. 

In particular, two short exact sequences
\[
1\rightarrow \mathrm{ker}(\phi)\rightarrow \Lambda_{g}\xrightarrow[]{\phi} \ZZ^{k}\rightarrow 1
\]
and
\[
1\rightarrow \mathrm{ker}(\psi)\rightarrow \Lambda_{g}\xrightarrow[]{\psi} \ZZ^{k}\rightarrow 1
\]
are isomorphic if and only if there exists $A\in Sp^{\pm}(2g,\ZZ)$ and $A'\in GL(k,\ZZ)$ such that the following diagram commutes
\[
\xymatrix{H_1(\Lambda_g,\ZZ) \ar[r]^{\phi_{ab}} \ar[d]^{A}  & \ZZ^k \ar[d] ^{A'}\\ 
	   H_1(\Lambda_g,\ZZ)   \ar[r]^{\psi_{ab}} & \ZZ^k \\}
\]
where $\phi_{ab}$ and $\psi_{ab}$ are the unique maps factoring $\phi$ and $\psi$ through their abelianisation.
\label{corIsomSESeq}
\end{corollary}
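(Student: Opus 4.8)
Corollary \ref{corIsomSESeq} is essentially a bookkeeping statement, translating the preceding paragraph's construction of the map $MCG^{\pm}(S_g)\to Sp^{\pm}(2g,\ZZ)$ into a concrete criterion for isomorphism of short exact sequences. The first claim, that the symplectic representation $\Psi$ descends to a surjective homomorphism $Out(\Lambda_g)\to Sp^{\pm}(2g,\ZZ)$, I would obtain by simply composing the Dehn-Nielsen-Baer isomorphism $MCG^{\pm}(S_g)\cong Out(\Lambda_g)$ (established in the excerpt) with Theorem \ref{thmMCGSp}; surjectivity is preserved under composition with an isomorphism, so nothing new is needed here. The substance of the corollary is the second claim, the ``if and only if'' characterisation.

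\emph{Only if.} Suppose the two short exact sequences are isomorphic, i.e.\ there is an automorphism $\tau\in Aut(\Lambda_g)$ and an automorphism $A'\in GL(k,\ZZ)$ making the square with vertical maps $\tau,A'$ commute (together with the induced isomorphism on kernels). First I would abelianise: since $f,h$ factor through $H_1(\Lambda_g,\ZZ)$ as $f_{ab},h_{ab}$, the automorphism $\tau$ induces $\tau_*=A\in GL(2g,\ZZ)$ on $H_1(\Lambda_g,\ZZ)$, and commutativity of the group-level square forces commutativity of the abelianised square $A'\circ f_{ab}=h_{ab}\circ A$. It then remains to check $A\in Sp^{\pm}(2g,\ZZ)$. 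This is exactly the content of the paragraph preceding the corollary: $\tau_*$ is realised by some homeomorphism $\alpha\in MCG^{\pm}(S_g)$ via Dehn-Nielsen-Baer, and such a homeomorphism either preserves or reverses the intersection form, giving $A^tJ_{2g}A=\pm J_{2g}$, i.e.\ $A\in Sp^{\pm}(2g,\ZZ)$.

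\emph{If.} Conversely, given $A\in Sp^{\pm}(2g,\ZZ)$ and $A'\in GL(k,\ZZ)$ making the abelianised square commute, I would use surjectivity of the representation $Out(\Lambda_g)\to Sp^{\pm}(2g,\ZZ)$ (the first claim) to lift $A$ to an automorphism $\tau\in Aut(\Lambda_g)$ with $\tau_*=A$. The pair $(\tau,A')$ then makes the group-level square commute after abelianising; since $f=f_{ab}\circ\mathrm{ab}$ and $h=h_{ab}\circ\mathrm{ab}$ and abelianisation is natural, $A'\circ f=h\circ\tau$ holds on the nose, so $\tau$ restricts to an isomorphism $\mathrm{ker}(f)\to\mathrm{ker}(h)$ and we obtain the desired isomorphism of short exact sequences.

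\emph{Main obstacle.} There is no deep obstacle here — the corollary is a formal consequence of the already-established facts. The one point demanding care is the \emph{only if} direction's claim that the \emph{abstract} automorphism $A$ of $H_1(\Lambda_g,\ZZ)$ must lie in $Sp^{\pm}(2g,\ZZ)$ rather than merely in $GL(2g,\ZZ)$: this is precisely where Dehn-Nielsen-Baer is used, to realise $A$ geometrically by a (possibly orientation-reversing) homeomorphism and thereby conclude it preserves the intersection form up to sign. I would state this step explicitly, since it is the only place the surface-topological input, as opposed to pure linear algebra, enters.
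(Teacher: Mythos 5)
Your proposal is correct and follows essentially the same route as the paper's proof: the first claim by composing Theorem \ref{thmMCGSp} with the Dehn--Nielsen--Baer isomorphism, the ``only if'' direction from the fact that every automorphism of $\Lambda_g$ induces an element of $Sp^{\pm}(2g,\ZZ)$ on homology, and the ``if'' direction by lifting $A$ to an automorphism of $\Lambda_g$ via surjectivity. Your write-up merely fills in details (naturality of abelianisation, the kernel-restriction argument) that the paper's terser proof leaves implicit.
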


\begin{proof}
The first part is a direct consequence of Theorem \ref{thmMCGSp} and the isomorphism $MCG^{\pm}(S_g)\cong Out(\Lambda_g)$.

The only if in the second part follows directly from the fact that for any outer automorphism of $\Lambda _g$, the induced map on homology is in the generalised symplectic group. The if follows, since we can lift any $A\in Sp(2g,\ZZ)$ to an element $\alpha \in Aut(\Lambda_g)$ inducing $A$ by the first part.
\end{proof}

\section{Classification for purely branched maps}
\label{secClassfb}

It follows from Section \ref{secComparison}, that showing that our groups are not isomorphic to Dimca, Papadima and Suciu's groups amounts to comparing maps on abelianisations and therefore reduces to a question in Linear Algebra. 

We will use the following classification of purely branched maps:

\begin{proposition}
 Let $S_g$ be a closed surface of genus $g$, let $E$ be an elliptic curve and let $f_g: S_g\rightarrow E$ be a purely branched covering map. Then the following are equivalent:
 
 \begin{enumerate}
 \item There are standard symplectic generating sets $\alpha_1,\beta_1,\cdots,\alpha_g,\beta_g$ of $\pi_1 S_g$ and $\alpha,\beta$ of $\pi_1E$ with respect to which the induced map on fundamental groups is
 \[
 \begin{array}{cccc}
  f_{g\ast}: &\pi_1 S_g &\rightarrow &\pi_1E\\
  & \alpha_1,\cdots,\alpha_k & \mapsto & \alpha\\
  & \beta_1,\cdots,\beta_k &\mapsto & \beta\\
  & \alpha_{k+1},\cdots,\beta_{g} & \mapsto & 0\\
  & \beta_{k+1},\cdots,\beta_{g}& \mapsto & 0.\\
 \end{array}
 \]
 \item The map $f_g$ is a $k$-fold branched covering map.
 \end{enumerate}
 \label{thmClassification}
\end{proposition}

Proposition \ref{thmClassification} follows from the following result:
 \begin{lemma}
  Let $S_g$ be a closed Riemann surface of genus $g$ and let $E$ be an elliptic curve. Let $f: S_g \rightarrow E$ be a holomorphic $k$-fold purely branched covering map. Then there exist standard generating sets $\alpha_1,\beta_1,\cdots, \alpha_g,\beta_g$ of  $\pi_1 S_g$ and $\alpha, \beta$ of $\pi_1 E$ such that the induced map on fundamental groups is of the form described in Proposition \ref{thmClassification}(1).
  \label{lemClassification}
 \end{lemma}
 
 \begin{proof}
  Let $B\subset E$ be the finite branching set of $f$. Since $f$ is purely branched there are generators $\alpha,\beta : \left[0,1\right] \rightarrow E\setminus B$ such that every lift of $\alpha$ and $\beta$ with respect to the unramified covering $f:S_g \setminus f^{-1}(B)\rightarrow E\setminus B$ is a loop.
  
  We may further assume that the only intersection point of $\alpha$ and $\beta$ is the point $\alpha(0)=\beta(0)$ and that the intersection number $\iota(\alpha,\beta)=1$ with respect to the orientation induced by the complex structure on $E$. 
  
  Since $f|_{S_g\setminus f^{-1}(B)}$ is a $k$-fold unramified covering map, there are precisely $k$ lifts $\alpha_1,\cdots,\alpha_k$ of $\alpha$ and $\beta_1,\cdots,\beta_k$ of $\beta$ and we may choose them so that $\iota(\alpha_i,\beta_j)=\delta _{ij}$, $\iota(\alpha_i,\alpha_j)=0$ and $\iota(\beta_i,\beta_j)=0$.
  
  Then the images of $\alpha_1,\beta _1, \cdots, \alpha_k,\beta_k$ in $H_1(S_g,\ZZ)$ forms part of a standard symplectic basis with respect to the symplectic intersection form on $H_1(S_g,\ZZ)$. Extend by $\alpha_{k+1},\beta_{k+1},\cdots, \alpha_g,\beta_g$ to a standard symplectic basis of $H_1(S_g,\ZZ)$. We claim that with respect to this basis $f_{\ast}$ takes the desired form.
  
 We may assume that $\alpha_{k+1},\beta_{k+1},\cdots, \alpha_g,\beta_g$ are loops in $S_g\setminus f^{-1}(B)$. Since $\alpha_j$, $k+1\leq j \leq g$, forms part of a symplectic basis we have that its intersection number with any of the $\alpha_i,\beta_i$ with $1\leq i \leq k$ is zero. Since all of the lifts of $\alpha,\beta$ are given by $\alpha_1,\beta_1, \cdots,\alpha_k,\beta_k$ and the map $f$ is holomorphic, thus orientation preserving, it follows that the intersection numbers $\iota(f\circ \alpha_j, \alpha)$ and $\iota(f\circ \alpha_j, \beta)$ satisfy
 \[
  \iota (f\circ \alpha_j,\alpha)=\sum_{i=1}^k \iota(\alpha_j,\alpha_i)=0,
 \]
 \[
 \iota (f\circ \alpha_j,\beta)=\sum_{i=1}^k \iota(\alpha_j,\beta_i)=0.
 \]
 
 Nondegeneracy of the symplectic intersection form on $H_1(E,\ZZ)=\pi_1E=\left\langle \alpha,\beta\mid \left[\alpha,\beta\right] \right\rangle$ then implies that $f\circ \alpha_j=0$ in $\pi_1 E$. Similarly $f\circ \beta_j=0$ in $\pi_1 E$ for $k+1\leq j\leq g$. 
 
 Since by definition $f\circ \alpha_i=\alpha$ and $f\circ \beta_i=\beta$ for $1\leq i \leq k$, it follows that with respect to the standard generating sets $\alpha_1,\beta_1,\cdots, \alpha_g,\beta_g$ of $\pi_1S_g$ and $\alpha,\beta$ of $\pi_1 E$, the induced map on fundamental groups is indeed
 \[
 \begin{array}{cccc}
  f_{\ast}: &\pi_1 S_g &\rightarrow &\pi_1E\\
  & \alpha_1,\cdots,\alpha_k & \mapsto & \alpha\\
  & \beta_1,\cdots,\beta_k &\mapsto & \beta\\
  & \alpha_{k+1},\cdots,\beta_{g} & \mapsto & 0\\
  & \beta_{k+1},\cdots,\beta_{g}& \mapsto & 0.\\
 \end{array}
 \]
 \end{proof}

  \begin{figure}[ht]
\includegraphics[width=10cm,keepaspectratio]{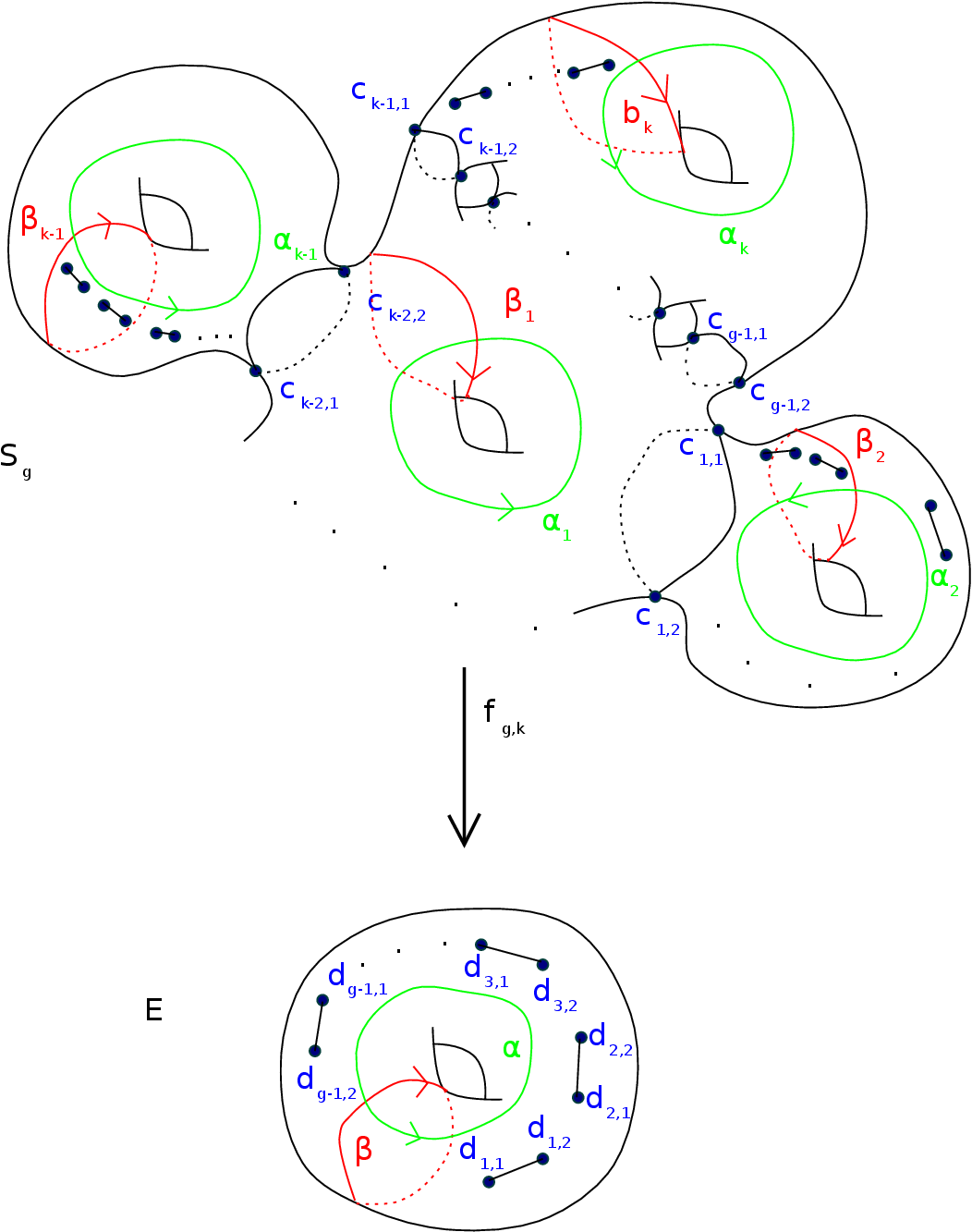}
\caption{The k-fold purely branched covering map $f_{g,k}$ of $E$ in Lemma \ref{lemkfoldfb}}
\label{figkfoldfb}
\end{figure} 

\begin{proof}[Proof of Proposition \ref{thmClassification}]
 By Lemma \ref{lemClassification} (2) implies (1). Assume that $f_{g,\ast}$ is of the form described in (1). 
 
The covering degree of $f_g$ can be obtained as
 \[
  \mathrm{deg}(f_g) = \left(f_g^{\ast}\alpha^{\ast} \cup f_g^{\ast} \beta^{\ast} \right) \cap \left[ S_g\right] =: \left\langle f_g^{\ast} \alpha^{\ast},f_g^{\ast} \beta^{\ast}\right\rangle,
 \]
 where $\cup$ denotes the cup product on cohomology, $\cap$ denotes the cap-product and $\left[S_g\right]\in H_2(S_g,\ZZ)$ is a fundamental class of $S_g$.
 
 Since $f_{g,\ast}$ is of the form described in (1) and $\alpha_1,\beta_1, \dots, \alpha_g,\beta_g$ is a standard symplectic generating set of $\pi_1 S_g$ and thus of $H_1(S_g,\ZZ)$, we obtain that 
 \[
 \mathrm{deg}(f_g)= \left\langle f_g^{\ast} \alpha^{\ast} , f_g^{\ast} \beta^{\ast} \right\rangle = \left\langle \sum_{i=1}^k \alpha_i^{\ast},\sum_{j=1}^k \beta_j^{\ast}\right\rangle = \sum _{i,j=1}^k \delta_{ij}=k.
 \]
 Thus $f_g$ is a $k$-fold branched covering map.
\end{proof}

\begin{remark}
 The elementary argument that (1) implies (2) in the proof Proposition \ref{thmClassification} was provided to us by the anonymous referee. In a previous version of this paper we proved this implication using the following argument: Proposition \ref{propNotIsom} and Corollary \ref{corIsomSESeq} imply that $k$ is an invariant of the purely branched covering map $f_g$ and, by Lemma \ref{lemClassification}, $k$ must then coincide with the degree of the map $f_g$.
\end{remark}

We show the existence of branched covering maps satisfying the equivalent conditions in Proposition \ref{thmClassification}.
 \begin{proposition}
  For every $g\geq 2$ and every $2\leq k \leq g$ there is a $k$-fold purely branched holomorphic covering map $f_{g,k}: S_g\rightarrow E$ with Morse type singularities.
  \label{lemkfoldfb}
 \end{proposition}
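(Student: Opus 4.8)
The plan is to give an explicit cut-and-paste construction of $f_{g,k}$ in direct analogy with Construction 2 of Section \ref{secSpecConst}, and then to read off each required property. First I would fix a small open disc $D\subset E$ together with $g-1$ pairwise disjoint simple arcs (slits) $s_1,\dots,s_{g-1}$ contained in $D$, whose $2(g-1)=2g-2$ endpoints will become the branch points. I would take $k$ labelled copies $E_0,\dots,E_{k-1}$ of $E$, cut each copy open along the slits assigned to it, and cross-glue: to each $s_i$ I assign an unordered pair of sheets $\{a_i,b_i\}$ and glue the two sides of the arc in sheet $a_i$ to those in sheet $b_i$ in the crosswise fashion, so that the monodromy around either endpoint of $s_i$ is the transposition $(a_i\,b_i)$. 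The freedom here is exactly the assignment $s_i\mapsto(a_i\,b_i)$, and I would use it to control connectivity: let $s_1,\dots,s_{k-1}$ realise the transpositions $(0\,1),(1\,2),\dots,(k-2\,k-1)$, which already generate a transitive subgroup of the symmetric group on $\{0,\dots,k-1\}$, and let the remaining $g-k$ slits $s_k,\dots,s_{g-1}$ realise, say, $(0\,1)$. This is possible precisely because $k\le g$ guarantees $g-1\ge k-1$ slits; compare Figure \ref{figkfoldfb}.

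Second, I would identify the resulting space $S$ and map $f_{g,k}\colon S\to E$. Connectedness of $S$ follows from transitivity of the monodromy, which is forced by the spanning choice of $s_1,\dots,s_{k-1}$. The map $f_{g,k}$ is a degree $k$ branched cover (there are $k$ sheets) whose branch points are exactly the $2g-2$ slit endpoints, each of local degree $2$. Since $E$ has Euler characteristic $0$, Riemann--Hurwitz gives $\chi(S)=k\cdot 0-(2g-2)=2-2g$, so $S$ has genus $g$. As throughout the paper there is a unique complex structure on $S$ making $f_{g,k}$ holomorphic; with respect to it every branch point has the local normal form $z\mapsto z^2$, whose differential $2z\,dz$ vanishes to first order and which is therefore a nondegenerate, i.e.\ Morse type, critical point. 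This already endows $f_{g,k}$ with all the numerical data and the singularity type demanded by the statement.

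Third and finally I would verify pure branching. Because all slits lie inside $D$, the restriction of $f_{g,k}$ to $E\setminus D$ is by construction the trivial $k$-sheeted covering $\bigsqcup_{j=0}^{k-1}(E\setminus D)\to E\setminus D$, so there is nothing to check there. Hence I may choose the standard generators $\alpha,\beta$ of $\pi_1 E$ to be represented by loops in $E\setminus\overline{D}$, and then each of their lifts is again a loop, one on each of the $k$ sheets; by the torus reformulation of Definition \ref{defpurelybranched} recorded immediately after it, this is exactly the assertion that $f_{g,k}$ is purely branched. I expect the main, if ultimately routine, obstacle to be the combinatorial bookkeeping of the first step: distributing the $g-1$ slits between the $k-1$ spanning transpositions and the $g-k$ auxiliary ones so as to realise the degree $k$ and the genus $g$ at the same time (as a consistency check, the monodromy around $\partial D\simeq[\alpha,\beta]$ is trivial, since each slit contributes a transposition at each endpoint). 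Once this distribution is arranged, connectedness, the Morse property, and pure branching all follow directly from the local model $z\mapsto z^2$ and from the triviality of the cover over $E\setminus D$.
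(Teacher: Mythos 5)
Your proposal is correct and follows essentially the same route as the paper: an explicit cut-and-paste construction gluing $k$ copies of $E$ along $g-1$ disjoint slits with transposition monodromies, reading off the genus from Riemann--Hurwitz, the Morse property from the local model $z\mapsto z^2$, and pure branching from the fact that the generators of $\pi_1 E$ can be represented by loops avoiding the slits, so that all their lifts are loops. The only (immaterial) difference is combinatorial: the paper glues in a star pattern, attaching each sheet $E_i$ ($1\leq i\leq k-2$) to $E_0$ along a single slit and $E_{k-1}$ to $E_0$ along the remaining $g-k+1$ slits, whereas you use a chain of transpositions $(0\,1),(1\,2),\dots,(k-2\;k-1)$ plus repeats of $(0\,1)$, and you confine all slits to a small disc to make the pure-branching verification explicit rather than leaving it to the figure.
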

 \begin{proof}
 Let $E$ be an elliptic curve, let $g\geq 2$, let $1\leq k\leq g$ and let $d_{1,1},d_{1,2},d_{2,1},\cdots , d_{g-1,1},d_{g-1,2}$ be $2(g-1)$ points in $E$ and $s_1,\cdots, s_{g-1}: \left[0,1\right]\rightarrow  E$ be simple, pairwise non-intersecting paths with starting point $s_i(0)=d_{i,1}$ and endpoint $s_i(1)=d_{i,2}$ for $i=1,\cdots,g-1$. Take $k$ copies $E_0$, $E_1,\cdots, E_{k-1}$ of $E$, cut $E_0$ open along all of the paths $s_i$, cut $E_i$ open along the path $s_i$ for $1\leq i \leq k-2$ and cut $E_{k-1}$ open along the paths $s_{k-1},\cdots, s_{g-1}$. This produces surfaces $F_0,\cdots, F_{k-1}$ with boundary.
 
 Gluing the surfaces $F_0,\cdots, F_{k-1}$ in the unique way given by identifying opposite edges in the corresponding boundary components $F_0$ and each of the $F_i$ we obtain a closed surface of genus $g$ together with a continuous $k$-fold purely branched covering map $f_{g,k}: S_g \rightarrow E$. Choosing the unique complex structure on $S_g$ that makes $f_{g,k}$ holomorphic we do indeed obtain a $k$-fold purely branched holomorphic covering map $f_{g,k}: S_g \rightarrow E$ pictured in Figure \ref{figkfoldfb}. Looking at this map in local coordinates it is immediate that all singularities are of Morse type.
\end{proof}

We will obtain a proof of Theorem \ref{thmClassfbKG} by combining Proposition \ref{thmClassification} with the following result in Linear Algebra.

\begin{proposition}
For $g\geq 2$ and $1\leq k<l\leq g$ there are no linear maps $A\in Sp^{\pm}(2g,\RR)$ and $B\in Gl(2,\ZZ)=Sp^{\pm}(2,\ZZ)$ 
such that 
\begin{equation}
( \underbrace{I \cdots I}_\text{k times} \underbrace{0 \cdots 0}_\text{g-k times} )\cdot A
=
B\cdot( 
\underbrace{I \cdots I}_\text{l times} \underbrace{0 \cdots 0}_\text{g-l times}),
\label{eqnLinCondGen}
\end{equation}
where $I=I_2=\left(\begin{array}{cc} 1&0\\ 0&1 \end{array}\right)$ is the 2-dimensional identity matrix.
\label{propNotIsom}
\end{proposition}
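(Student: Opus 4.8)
The plan is to recast condition \eqref{eqnLinCondGen} as a single matrix identity and then to contract it with the symplectic form, thereby extracting a numerical invariant that tells $k$ and $l$ apart. Abbreviate $P_k=(\underbrace{I\cdots I}_{k}\underbrace{0\cdots 0}_{g-k})$ and define $P_l$ analogously, so that \eqref{eqnLinCondGen} becomes $P_kA=BP_l$, with $A\in Sp^{\pm}(2g,\RR)$ (hence invertible, since $\det A=\pm1$) and $B\in GL(2,\ZZ)$. The key observation is that the $2\times2$ matrix $P_kJ_{2g}^{-1}P_k^t$ records exactly the integer $k$: as $J_{2g}$ is block diagonal with diagonal blocks $J_2$, and $P_k$ merely selects and sums the first $k$ coordinate pairs, a short block computation gives $P_kJ_{2g}^{-1}P_k^t=-kJ_2$, and likewise $P_lJ_{2g}^{-1}P_l^t=-lJ_2$.

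I would then substitute $P_k=BP_lA^{-1}$ into this invariant. Writing $A^tJ_{2g}A=\eps J_{2g}$ with $\eps=\pm1$ (the sign recording whether $A$ is symplectic or anti-symplectic), inverting this relation gives $A^{-1}J_{2g}^{-1}(A^{-1})^t=\eps J_{2g}^{-1}$, so that
\[
-kJ_2=P_kJ_{2g}^{-1}P_k^t=\eps\,B\bigl(P_lJ_{2g}^{-1}P_l^t\bigr)B^t=-\eps\,l\,BJ_2B^t.
\]
The elementary identity $BJ_2B^t=\det(B)J_2$, valid for any $2\times2$ matrix $B$, combined with $\det B=\pm1$ (because $B\in GL(2,\ZZ)$), reduces this to $k=\eps\det(B)\,l$, whence $k=\pm l$. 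Since $1\le k<l$ are positive integers, this is impossible, so no such $A$ and $B$ exist.

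The computation is routine; the only real content is the choice of the invariant $P_\bullet J_{2g}^{-1}P_\bullet^t$, which measures the symplectic area $k$ of the plane spanned by the summed coordinate pairs and behaves multiplicatively under pre- and post-composition. The point worth stressing --- and the step I expect to be the crux rather than an obstacle --- is that $A$ and $B$ can each change this invariant only by a sign, namely by $\eps$ and by $\det(B)$ respectively; consequently the strict inequality $k<l$ between positive integers can never be accommodated. Both the symplectic and anti-symplectic cases for $A$, and both determinant signs for $B$, are handled uniformly by carrying the single sign $\eps\det(B)\in\{\pm1\}$ through the argument.
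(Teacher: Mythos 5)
Your proof is correct, and it takes a genuinely different route from the paper's. The paper argues by contradiction with a rank count: writing $A$ in $2\times 2$ blocks, it splits $\pm J_{2g}=A^tJ_{2g}A$ into a summand $E$ factoring through the first $2k$ coordinates and a summand $F$ factoring through the remaining $2g-2k$; the hypothesis \eqref{eqnLinCondGen} pins down $E$ explicitly up to a correction term of rank at most $2k-2$, and an auxiliary invertibility lemma (Lemma \ref{lemInvble}) then forces $\mathrm{rank}(F)\geq 2(g-k)+2$, contradicting $\mathrm{rank}(F)\leq 2(g-k)$. You instead contract \eqref{eqnLinCondGen} with the symplectic form: your invariant $P\mapsto PJ_{2g}^{-1}P^t$ takes the value $-kJ_2$ on $P_k$, is multiplied by $\eps$ when $P$ is replaced by $PA^{-1}$ and by $\det(B)$ when $P$ is replaced by $BP$, so substituting $P_k=BP_lA^{-1}$ yields $k=\eps\det(B)\,l$, i.e.\ $k=\pm l$, which is impossible for $1\leq k<l$. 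Each step checks out: $P_kJ_{2g}^{-1}P_k^t=kJ_2^{-1}=-kJ_2$ by the block-diagonal form of $J_{2g}$; inverting $A^tJ_{2g}A=\eps J_{2g}$ does give $A^{-1}J_{2g}^{-1}(A^{-1})^t=\eps J_{2g}^{-1}$; and $BJ_2B^t=\det(B)J_2$ holds for every $2\times 2$ matrix. What your approach buys: it is far shorter, needs no auxiliary lemma, treats all four sign combinations uniformly through the single sign $\eps\det(B)$, and isolates the precise obstruction $k\neq\pm l$ --- exactly the condition that appears in the paper's Lemma \ref{lemInvble}. Conceptually, your invariant is the homological degree of the branched covering (the pushforward of the intersection form), which explains a priori why it can only change by a sign. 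What the paper's longer computation buys is explicit control of the block structure of $A$, which is closer in spirit to the generalisation to non-purely-branched coverings contemplated in the paper's final remark; but as a proof of Proposition \ref{propNotIsom} itself, your argument is cleaner.
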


\begin{proof}
The proof is by contradiction. 

Assume that there is $A \in Sp^{\pm}(2g,\RR)$ and $B\in Gl(2,\ZZ)=Sp^{\pm}(2,\ZZ)$ satisfying Equation \eqref{eqnLinCondGen}. Define $\gamma_1,\cdots,\gamma_{2g}\in \RR ^{2k}$ and $\alpha _1,\cdots, \alpha_{2g}\in \RR^{2g-2k}$ by
 \[
 A=\left(\begin{array}{cccc}
 \gamma_1 & \gamma_2 & \cdots & \gamma_{2g}\\
 \alpha_1 & \alpha_2 & \cdots & \alpha_{2g}\\
 \end{array}\right).
 \]
 
 Then $A\in Sp^{\pm}(2g,\RR)$ implies that
{\small
 \begin{align*}
 \pm J_{2g}=A^tJA = &
 \left(\begin{array}{cc}
 \gamma_1^t &\alpha_1 ^t\\
 \vdots&\vdots\\
 \gamma_{2g}^t &\alpha_{2g}^t\\
 \end{array}\right)\cdot
 \left(\begin{array}{cc} J_{2k}&0\\ 0&J_{2g-2k}\\\end{array}\right)\cdot
 \left(\begin{array}{cccc}
 \gamma_1 & \gamma_2 & \cdots & \gamma_{2g}\\
 \alpha_1 & \alpha_2 & \cdots & \alpha_{2g}\\
 \end{array}\right)\\
 =& \left(\begin{array}{c}\gamma_1^t\\ \vdots \\ \gamma_{2g}^t\end{array}\right)\cdot J_{2k}\cdot \left(\gamma_1\cdots \gamma_{2g}\right) + \left(\begin{array}{c}\alpha_1^t\\ \vdots \\ \alpha_{2g}^t\end{array}\right) \cdot J_{2g-2k} \cdot \left(\alpha_1\cdots \alpha_{2g}\right)\\
 =& \underbrace{\left[\gamma_i^t\cdot J_{2k}\cdot \gamma_j\right]_{i,j=1,\cdots,2g}}_{=:E} + \underbrace{\left[\alpha_i^t\cdot J_{2g-2k}\cdot \alpha_j\right]_{i,j=1,\cdots, 2g}}_{=:F}\\
 \end{align*}}
 The map $E$ is of rank $\leq 2k$, since it splits through $\RR^{2k}$ and the map $F$ is of rank $\leq 2g-2k$, since it splits through $\RR^{2g-2k}$.
 
 We will now prove that in fact $E$ is of rank $\leq 2k-2$. Equation \eqref{eqnLinCondGen} implies that for 
 \[
  \left(\gamma_1 \cdots \gamma_{2g}\right)=\left(\begin{array}{ccc} A_{11}&\cdots& A_{1g}\\ \vdots & \ddots & \vdots\\ A_{k1} & \cdots & A_{kg} \\ \end{array} \right),
 \]
 where $A_{ij}\in \RR^{2\times 2}$ for $1\leq i\leq k$ and $1\leq j\leq g$, we have
 \[
  (\underbrace{I\cdots I}_\text{k times})\cdot \left(\begin{array}{ccc} A_{11}&\cdots& A_{1g}\\ \vdots & \ddots & \vdots\\ A_{k1} & \cdots & A_{kg} \\\end{array} \right) = (\underbrace{B\cdots B}_\text{l times} \underbrace{0\cdots 0}_\text{g-l times}).
 \]
 
 It follows that
 \[
 \sum_{i=1}^k A_{ij}=\left\{ \begin{array}{ll} B &\text{if $j\leq l$}\\ 0 & \text{if $j>l$}\\\end{array} \right.
 \]
 and hence, that

 \[
 \begin{array}{ll}
  \left(\gamma_1 \cdots \gamma_{2g}\right)&= \mbox{ \resizebox{8cm}{.75cm}{ $
  \left(\begin{array}{cccccc} A_{11}&\cdots& A_{1l} & A_{1(l+1)} &\cdots & A_{1g}\\ 
  \vdots & \ddots & \vdots &\vdots & \ddots & \vdots\\ 
  A_{(k-1)1} & \cdots& A_{(k-1)l} &A_{(k-1)(l+1)} & \cdots & A_{(k-1)g} \\
  B-\sum_{i=1}^{k-1} A_{i1} &\cdots & B-\sum_{i=1}^{k-1} A_{il} & -\sum_{i=1}^{k-1} A_{i(l+1)}& \cdots
 & -\sum_{i=1}^{k-1} A_{ig} \end{array} \right)$ }}\\ &=: \left(M N\right)
 \end{array}
 \]

 with $M\in \RR^{2k\times 2l}$ and $N\in \RR^{2k\times 2(g-l)}$.
 
 Then, we have
 \[
 E= \left(\begin{array}{cc} M^t J_{2k} M & M^t J_{2k} N\\ N^t J_{2k} M & N^t J_{2k} N\\\end{array}\right).
 \]
 
 Define linear maps
 {\small \[
  \begin{array}{ll}
   M'=&\left[\mathrm{det}B\cdot \left( B^{-1}A_{ij}-\frac{1}{\sqrt{k}-1} \sum_{l=1}^{k-1}B^{-1}A_{lj} + \frac{I_2}{\sqrt{k}}\right)\right]_{i=1,\cdots,k-1, j=1,\cdots,l}\\ & \in \RR^{2(k-1)\times 2l}
  \end{array}
 \]
 \[
 \begin{array}{ll}
  N' =& \left[\mathrm{det B}\cdot \left(B^{-1}A_{im} - \frac{1}{\sqrt{k}-1} \sum_{r=1}^{k-1} B^{-1}A_{rm}\right)\right]_{i=1,\cdots, k-1,m=l+1,\cdots, g}\\ &\in \RR^{2(k-1)\times 2(g-l)}.
 \end{array}
 \]}
 
 Using that $B^tJ_2B=\mathrm{det}B \cdot J_2$, we obtain the following identities
 \[
 M^t J_{2k} M= \frac{\mathrm{det} B}{k} \cdot \left(\begin{array}{ccc} J& \cdots& J\\ \vdots & \ddots & \vdots \\ J & \cdots & J \end{array}\right) + M'^t J_{2k-2} M'
 \]
 \[
 M^t J_{2k} N = M'^t J_{2k-2} N'
 \]
 \[
  N^t J_{2k} M = N'^t J_{2k-2} M'
 \]
 \[
  N^t J_{2k} N = N'^t J_{2k-2} N'
 \]
 
 In particular, this implies that
 \[
 \begin{array}{cl}
  E = \left[\gamma_i^t J_{2k}\gamma_j\right]_{i,j=1,\cdots,2g} = &
  \left(\begin{array}{cc}   \begin{array}{ccc} \frac{\mathrm{det} B}{k}J& \cdots& \frac{\mathrm{det} B}{k}J\\ \vdots & \ddots & \vdots \\ \frac{\mathrm{det} B}{k}J & \cdots & \frac{\mathrm{det} B}{k}J \end{array} &0\\ 0 & 0 \\\end{array}\right)\\ &+\underbrace{ \left(\begin{array}{cc} M'^t \\ N'^t\end{array}\right) \cdot J_{2k-2} \cdot \left(M' N'\right)}_{=:S}.
  \end{array}
 \]
 
 The linear map $S$ splits through $\RR^{2k-2}$, implying that $\mathrm{rank}(S)\leq 2k-2$. Furthermore, $\pm J_{2g} = E + F$ and $\mathrm{det}B=\pm 1$ imply that

 \begin{equation}
 \begin{array}{ll}
  F &= \pm J_{2g} - E\\ &=  \mbox{ {\resizebox{8cm}{1.4cm}{ $\underbrace{\left(\begin{array}{cc} 
  \begin{array}{ccccc} 
  (\pm 1 \pm \frac{1}{k})J& \pm \frac{1}{k}J & \cdots& \cdots & \pm \frac{1}{k}J\\
     \pm \frac{1}{k}J &\ddots & \ddots & & \vdots \\
 	\vdots & \ddots & \ddots & \ddots & \vdots \\
 	\vdots & & \ddots & \ddots & \pm \frac{1}{k}J\\   
    \pm \frac{1}{k}J & \cdots & \cdots & \pm \frac{1}{k}J& (\pm 1\pm \frac{1}{k})J\\ \end{array}
   &0\\ 0 & \pm J_{2(g-l)} \\\end{array}\right)}_{=:R} - S$} }}
  \end{array}
\label{eqnInvbleR} 
 \end{equation}

 Hence, $F$ has rank $\geq 2g-(2k-2)= 2(g-k) + 2 $, since by Lemma \ref{lemInvble} below $R$ is invertible for $l\neq k$. This contradicts $\mathrm{rank}F\leq 2(g-k)$, showing that there are no $A\in Sp^{\pm}(2g,\RR)$ and $B\in Gl(2,\ZZ)$ satisfying \eqref{eqnLinCondGen}. 
\end{proof}

\begin{lemma}
 For $l\in \ZZ$, $k\in \RR$, the linear map $R$ defined in \eqref{eqnInvbleR} is invertible if and only if $l\neq \pm k$. 
 \label{lemInvble}
\end{lemma}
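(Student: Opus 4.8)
The plan is to exploit the block-diagonal shape of $R$ and reduce everything to the invertibility of a single $l\times l$ rank-one perturbation of a scalar matrix. Write the two independent signs occurring in \eqref{eqnInvbleR} as $\epsilon_1=\pm 1$ (the sign of the leading term, i.e. $\epsilon_1=\pm1$ according to whether $A^tJ_{2g}A=J_{2g}$ or $=-J_{2g}$) and $\epsilon_2=\det B=\pm1$. Then the off-diagonal blocks of the upper-left corner are $-\frac{\epsilon_2}{k}J$, the diagonal blocks are $(\epsilon_1-\frac{\epsilon_2}{k})J$, and the lower-right block is $\epsilon_1 J_{2(g-l)}$. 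Since $R$ is block diagonal and $\det(\epsilon_1 J_{2(g-l)})=\epsilon_1^{2(g-l)}(\det J)^{g-l}=1\neq 0$, the contribution of the lower-right block is always invertible; hence $R$ is invertible if and only if its upper-left $2l\times 2l$ block $P$ is invertible.

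First I would record that $P$ has a clean tensor form. Writing $\mathbf{1}_l\in\RR^l$ for the all-ones column vector and $I_l$ for the identity, the block matrix whose $(i,j)$ block is $(\epsilon_1\delta_{ij}-\frac{\epsilon_2}{k})J$ is exactly
\[
P=\Big(\epsilon_1 I_l-\tfrac{\epsilon_2}{k}\,\mathbf{1}_l\mathbf{1}_l^t\Big)\otimes J .
\]
Because $J$ is invertible, $P$ is invertible if and only if the scalar $l\times l$ matrix $A:=\epsilon_1 I_l-\frac{\epsilon_2}{k}\mathbf{1}_l\mathbf{1}_l^t$ is. Now $\mathbf{1}_l\mathbf{1}_l^t$ is the rank-one matrix with eigenvalue $l$ on $\mathbf{1}_l$ and eigenvalue $0$ on the orthogonal complement, so the eigenvalues of $A$ are $\epsilon_1-\frac{\epsilon_2 l}{k}$ (simple) and $\epsilon_1$ (with multiplicity $l-1$); equivalently, the matrix determinant lemma gives $\det A=\epsilon_1^{l}\big(1-\frac{\epsilon_2 l}{\epsilon_1 k}\big)$. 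As $\epsilon_1=\pm1$, this vanishes precisely when $\epsilon_1 k=\epsilon_2 l$, that is, when $k=\epsilon_1\epsilon_2\,l$.

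Finally, since $\epsilon_1\epsilon_2\in\{+1,-1\}$, the condition $\det A=0$ is exactly $l=\pm k$, so $R$ is invertible if and only if $l\neq\pm k$, which is the claim. (In the application of Proposition \ref{propNotIsom} one has $1\leq k<l$, so $l\neq\pm k$ holds automatically and $R$ is invertible regardless of the signs.) I expect the only genuine bookkeeping obstacle to be keeping the two signs $\epsilon_1,\epsilon_2$ straight and confirming that they enter the criterion only through the product $\epsilon_1\epsilon_2$; recognizing the tensor/rank-one structure of $P$ is what makes the determinant transparent. If one prefers to avoid Kronecker products entirely, the same conclusion follows from a direct kernel computation: for $Py=0$ with blocks $y_i\in\RR^2$, applying $J^{-1}$ forces all $y_i$ equal, say to $w$, and then $(\epsilon_1-\frac{\epsilon_2 l}{k})Jw=0$, which forces $w=0$ exactly when $k\neq\epsilon_1\epsilon_2 l$.
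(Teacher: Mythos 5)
Your proof is correct, and it takes a genuinely different route from the paper. The paper proves the lemma by explicit Gaussian elimination on the block matrix $R_{2l}$: it subtracts the last double row from all the others, then clears the last row, arriving at a block-triangular matrix whose corner entry $(\pm 1 \pm \frac{l}{k})J$ gives the invertibility criterion directly. You instead recognize the upper-left corner as the Kronecker product $\bigl(\epsilon_1 I_l-\tfrac{\epsilon_2}{k}\,\mathbf{1}_l\mathbf{1}_l^t\bigr)\otimes J$ and invoke the spectral theory of a rank-one perturbation of a scalar matrix (equivalently, the matrix determinant lemma), obtaining $\det\bigl(\epsilon_1 I_l-\tfrac{\epsilon_2}{k}\,\mathbf{1}_l\mathbf{1}_l^t\bigr)=\epsilon_1^{l-1}\bigl(\epsilon_1-\tfrac{\epsilon_2 l}{k}\bigr)$; your fallback kernel computation ($Py=0$ forces all blocks $y_i$ equal, then $(\epsilon_1-\tfrac{\epsilon_2 l}{k})w=0$) is an equally valid elementary substitute. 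What your approach buys is conceptual transparency: the tensor/rank-one structure exposes the full spectrum at once, makes clear that the two ambiguous signs of \eqref{eqnInvbleR} enter only through the product $\epsilon_1\epsilon_2$ (a point the paper's $\pm$ bookkeeping leaves implicit), and generalizes readily to other block patterns of this type. What the paper's row reduction buys is self-containedness: it needs no structural observation at all, just two elimination steps. Both arguments establish the statement in the form the application needs, namely that $l\neq\pm k$ guarantees invertibility for every choice of signs, which is exactly what Proposition \ref{propNotIsom} uses since there $1\leq k<l\leq g$.
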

 \begin{proof}
 Clearly, it suffices to prove that for $k\neq l$, the matrix
 \[
 R_{2l}=\left(
 \begin{array}{ccccc} 
  (\pm 1 \pm \frac{1}{k})J& \pm \frac{1}{k}J & \cdots& \cdots & \pm \frac{1}{k}J\\
     \pm \frac{1}{k}J &\ddots & \ddots & & \vdots \\
 	\vdots & \ddots & \ddots & \ddots & \vdots \\
 	\vdots & & \ddots & \ddots & \pm \frac{1}{k}J\\   
    \pm \frac{1}{k}J & \cdots & \cdots & \pm \frac{1}{k}J& (\pm 1\pm \frac{1}{k})J\\ \end{array}
 \right)\in \RR^{2l\times 2l}
 \]
 is invertible. We do row and column operations in order to compute the rank of $R_{2l}$. 
 
 Subtracting the last (double) row from all other rows yields
 \[
 \left(
 \begin{array}{cccccc} 
  \pm J&  0 & \cdots & 0 & \mp J\\
     0 & \pm J &\ddots &  \vdots & \vdots \\
 	\vdots & \ddots & \ddots & 0 & \vdots \\
 	0 &\cdots  & 0 & \pm J  & \mp  J\\   
    \pm \frac{1}{k}J & \cdots &\cdots & \pm \frac{1}{k}J& (\pm 1\pm \frac{1}{k})J\\ \end{array}
 \right)
 \] 
 
 After subtracting multiples of the first $(l-1)$ (double) rows from the last row we obtain
 \[
  \left(
 \begin{array}{cccccc} 
  \pm J&  0 & \cdots & 0 & \mp J\\
     0 & \pm J &\ddots &  \vdots & \vdots \\
 	\vdots & \ddots & \ddots & 0 & \vdots \\
 	0 &\cdots  & 0 & \pm J  & \mp  J\\   
    0 & \cdots &\cdots & 0& (\pm 1\pm (\frac{1}{k}+\frac{l-1}{k}))J\\ \end{array}
 \right).
 \]
 Hence, $R$ is invertible if and only if $\pm 1 \pm \frac{l}{k} \neq 0$. This is clearly the case for all choices of signs if and only if $l\neq \pm k$, completing the proof.
 \end{proof}
 
\begin{proof}[Proof of Theorem \ref{thmClassfbKG}]
 Theorem \ref{thmFinProp} implies that the groups $\pi_1 H_p$ and $\pi_1 H_q$ are kernels of short exact sequences of the form described in Proposition \ref{thmAbstractIsom}. Thus, they are isomorphic if and only if the corresponding short exact sequences are. Since by assumption all maps are purely branched covering maps, the induced maps on fundamental groups are of the form described in Proposition \ref{thmClassification}(1). Hence, the result follows from Proposition \ref{thmIsomSESeq}, Corollary \ref{corIsomSESeq} and Proposition \ref{propNotIsom}.
\end{proof}

It follows that our groups are indeed not isomorphic to Dimca, Papadima, and Suciu's groups apart from the obvious isomorphism for $r=s$ and $g_1=\cdots=g_r=h_1=\cdots=h_r=2$:

\begin{corollary}
 The K\"ahler groups $\mathrm{ker}(\phi_{g_1,\cdots,g_r})$ obtained from \resizebox{18pt}{7pt}{ \eqref{eqnDPSmap}} and $\mathrm{ker}(\psi_{h_1,\cdots,h_r})$ obtained from \eqref{eqnOurExamples} are isomorphic if and only if $r=s$ and $g_1=\cdots=g_r=h_1=\cdots=h_r=2$.
\end{corollary}
\begin{proof}
This is an immediate consequence of Theorem \ref{thmClassfbKG} and the fact that we constructed our groups as fundamental groups of the fibre of a sum of $h_i$-fold purely branched holomorphic maps in Section \ref{secSpecConst}, while, as we also saw in Section \ref{secSpecConst}, Dimca, Papadima and Suciu's groups arise as fundamental groups of the fibre of a sum of $2$-fold purely branched holomorphic maps.
\end{proof}

\begin{remark}
Note that Proposition \ref{thmAbstractIsom} and Proposition \ref{thmClassification} also allow us to distinguish K\"ahler groups arising from our construction for which not all maps are purely branched, provided that the purely branched maps do not coincide on fundamental groups up to reordering and choosing suitable standard generating sets.

It seems reasonable to us that there is a further generalisation of Proposition \ref{propNotIsom} to branched covering maps which are not purely branched. A suitable generalisation would allow us to classify all K\"ahler groups that can arise using our construction up to isomorphism. 
\end{remark}
 
We want to conclude by giving an example of a branched covering map which is surjective on fundamental groups, but is not purely branched:

\begin{example}
\label{ex:surjnotpb}
 Let $f_2: S_2 \rightarrow E$ be the 2-fold purely branched covering map in Dimca, Papadima and Suciu's construction, let $B \subset E$ be its branching locus and let $a,b : \left[0,1\right] \rightarrow E\setminus B$ be any choice of simple closed generators for $\pi_1 E$ with the properties that $a$ and $b$ intersect in a single point and all lifts of $a$ and $b$ are loops. Lemma \ref{lemClassification} and its proof imply that there is a generating set $a_1,b_1,a_2,b_2$ of $\pi_1 S_2$ such that the induced map $\pi_1 S_2 \rightarrow \pi_1 E$ on fundamental groups is given by $a_i\mapsto a$, $b_i\mapsto b$ and the set of all lifts of the loops $a$ and $b$ to $S_2$ forms a symplectic generating set for the homology $H_2(S_2,\ZZ)$. 
 
 Now fix a choice of generators $a$ and $b$ as above and consider the unramified 2-fold covering map $p: S_3 \rightarrow S_2$ induced by the epimorphism $H_1(S_2,\ZZ)\rightarrow \ZZ/2\ZZ$, $a_1\mapsto 1$, $a_2,b_1,b_2\mapsto 0$. Then no lift of $a_1$ to $S_3$ is a loop and all lifts of $a_2$, $b_1$, $b_2$ to $S_3$ are loops. It follows that the branched covering map $f_2\circ p$ is surjective on fundamental groups. By the previous paragraph, any different choice of simple closed loops $\mu_1,\mu_2: \left[0,1\right]\rightarrow E\setminus B$, satisfying the conditions in the definition of a purely branched covering map, lifts to a symplectic generating set $\mu_{1,1},\mu_{2,1},\mu_{1,2},\mu_{2,2}$ of $H_1(S_2,\ZZ)$. Since the induced map $p_{\ast} : H_1(S_3,\ZZ)\rightarrow H_1(S_2,\ZZ)$ on homology is not surjective, there is a lift of one of the $\mu_{i,j}$ which is not a loop in $S_3$. It follows that $f_2\circ p$ is not a purely branched covering.
\end{example}

\bibliography{References}

\providecommand{\bysame}{\leavevmode\hbox to3em{\hrulefill}\thinspace}
\providecommand{\MR}{\relax\ifhmode\unskip\space\fi MR }
\providecommand{\MRhref}[2]{%
  \href{http://www.ams.org/mathscinet-getitem?mr=#1}{#2}
}
\providecommand{\href}[2]{#2}
\begin{thebibliography}{10}

\bibitem{BesBra-97}
M.~Bestvina and N.~Brady, \emph{Morse theory and finiteness properties of
  groups}, Invent. Math. \textbf{129} (1997), no.~3, 445--470.

\bibitem{BisMjPan-14}
I.~Biswas, M.~Mj, and D.~Pancholi, \emph{{Homotopical Height}}, Int. J. Math.
  Vol. \textbf{25} (2014), no.~13.

\bibitem{BriHow-07}
M.R. Bridson and J.~Howie, \emph{Normalisers in limit groups}, Math. Ann.
  \textbf{337} (2007), no.~2, 385--394.

\bibitem{BriHowMilSho-02}
M.R. Bridson, J.~Howie, C.F. Miller~III, and H.~Short, \emph{{The subgroups of
  direct products of surface groups}}, Geometriae Dedicata \textbf{92} (2002),
  95--103.

\bibitem{BriHowMilSho-13}
M.R. Bridson, J.~Howie, C.F. Miller~III, and H.~Short, \emph{{On the finite presentation of subdirect products and the nature
  of residually free groups}}, American Journal of Math. \textbf{135} (2013),
  no.~4, 891--933.

\bibitem{Bro-82}
K.S. Brown, \emph{Cohomology of groups}, Graduate Texts in Mathematics,
  vol.~87, Springer-Verlag, New York-Berlin, 1982.

\bibitem{DimPapSuc-08}
A.~Dimca, S.~Papadima, and A.I. Suciu, \emph{Quasi-{K}\"ahler
  {B}estvina--{B}rady groups}, J. Algebraic Geom. \textbf{17} (2008), no.~1,
  185--197.

\bibitem{DimPapSuc-09-II}
A.~Dimca, S~Papadima, and A.I. Suciu, \emph{Non-finiteness properties of
  fundamental groups of smooth projective varieties}, J. Reine Angew. Math.
  \textbf{629} (2009), 89--105.

\bibitem{FarMar-12}
B.~Farb and D.~Margalit, \emph{{A primer on Mapping Class groups}}, Princeton
  Math. Series, vol.~49, Princeton University Press, Princeton, NJ, 2012.

\bibitem{Jac-70}
W.~Jaco, \emph{On certain subgroups of the fundamental group of a closed
  surface}, Proc. Cambridge Philos. Soc. \textbf{67} (1970), 17--18.

\bibitem{Llo-16}
C.~Llosa~Isenrich, \emph{Finite presentations for {K}\"ahler groups with
  arbitrary finiteness properties}, J. Algebra \textbf{476} (2017), 344--367.

\bibitem{Suc-12}
A.I. Suciu, \emph{Characteristic varieties and {B}etti numbers of free abelian
  covers}, Int. Math. Res. Not. IMRN (2014), no.~4, 1063--1124.

\end{thebibliography}
\bibliographystyle{amsplain}

\end{document}